\definecolor{Red}{rgb}{1,0,0}
\definecolor{Blue}{rgb}{0,0,1}
\definecolor{Olive}{rgb}{0.41,0.55,0.13}
\definecolor{Yarok}{rgb}{0,0.5,0}
\definecolor{Green}{rgb}{0,1,0}
\definecolor{MGreen}{rgb}{0,0.8,0}
\definecolor{DGreen}{rgb}{0,0.55,0}
\definecolor{Yellow}{rgb}{1,1,0}
\definecolor{Cyan}{rgb}{0,1,1}
\definecolor{Magenta}{rgb}{1,0,1}
\definecolor{Orange}{rgb}{1,.5,0}
\definecolor{Violet}{rgb}{.5,0,.5}
\definecolor{Purple}{rgb}{.75,0,.25}
\definecolor{Brown}{rgb}{.75,.5,.25}
\definecolor{Grey}{rgb}{.5,.5,.5}
\newcommand{\pr}{\mathbb{P}}
\newcommand{\R}{\mathbb{R}}
\newcommand{\Z}{\mathbb{Z}}
\newcommand{\distr}{\stackrel{d}{=}}
\newcommand{\N}{{\bf \mathcal{N}}}
\newcommand{\ignore}[1]{\relax}
\newtheorem{theorem}{Theorem}[section]
\newtheorem{remark}[theorem]{Remark}
\newtheorem{lemma}[theorem]{Lemma}
\newtheorem{conjecture}[theorem]{Conjecture}
\newtheorem{proposition}[theorem]{Proposition}
\newcommand{\ER}{Erd{\"o}s-R\'{e}nyi }
\definecolor{Red}{rgb}{1,0,0}
\definecolor{Blue}{rgb}{0,0,1}
\definecolor{Olive}{rgb}{0.41,0.55,0.13}
\definecolor{Green}{rgb}{0,1,0}
\definecolor{MGreen}{rgb}{0,0.8,0}
\definecolor{DGreen}{rgb}{0,0.55,0}
\definecolor{Yellow}{rgb}{1,1,0}
\definecolor{Cyan}{rgb}{0,1,1}
\definecolor{Magenta}{rgb}{1,0,1}
\definecolor{Orange}{rgb}{1,.5,0}
\definecolor{Violet}{rgb}{.5,0,.5}
\definecolor{Purple}{rgb}{.75,0,.25}
\definecolor{Brown}{rgb}{.75,.5,.25}
\definecolor{Grey}{rgb}{.5,.5,.5}
\definecolor{Pink}{rgb}{1,0,1}
\definecolor{DBrown}{rgb}{.5,.34,.16}
\definecolor{Black}{rgb}{0,0,0}
\author{{\sf David Gamarnik}\thanks{MIT; e-mail: {\tt gamarnik@mit.edu}. Support from ONR Grant N00014-17-1-2790 is gratefully acknowledged.} \and
{\sf Eren C. K{\i}z{\i}lda\u{g}}\thanks{MIT; e-mail: {\tt kizildag@mit.edu}.}
}
\begin{document}

\title{Computing the partition function of the Sherrington-Kirkpatrick model is hard on average\footnote{This paper is a strengthened version of an earlier unpublished manuscript \cite{gamarnik2018computing}.}}
\date{\today}

\maketitle

\begin{abstract}
We establish the average-case hardness of the algorithmic problem of exact computation of the partition function associated with the Sherrington-Kirkpatrick model of spin glasses with Gaussian couplings and random external field. In particular, we establish that unless $P= \#P$, there does not exist a polynomial-time algorithm to exactly compute the partition function on average. This is done by showing that if there exists a polynomial time algorithm, which exactly computes the partition function for inverse polynomial fraction ($1/n^{O(1)}$) of all inputs, then there is a polynomial time algorithm, which exactly computes the partition function for all inputs, with high probability, yielding $P=\#P$. The computational model that we adopt is  {\em finite-precision arithmetic}, where the algorithmic inputs are truncated first to a certain level $N$ of digital precision. The ingredients of our proof include the random and downward self-reducibility of the partition function with random external field; an argument of Cai et al. \cite{cai1999hardness} for establishing the average-case hardness of computing the permanent of a matrix; a list-decoding algorithm of Sudan \cite{sudan1996maximum}, for reconstructing polynomials intersecting a given list of numbers at sufficiently many points; and near-uniformity of the log-normal distribution, modulo a large prime $p$. To the best of our knowledge, our result is the first one establishing a provable hardness of a model arising in the field of spin glasses.

Furthermore, we extend our result to the same problem under a different {\em real-valued} computational model, e.g. using a Blum-Shub-Smale machine \cite{blum1988theory} operating over real-valued inputs. We establish that, if there exists a polynomial time algorithm which exactly computes the partition function for $\frac34+\frac{1}{n^{O(1)}}$ fraction of all inputs, then there exists a polynomial time algorithm, which exactly computes the partition function for all inputs, with high probability, yielding $P=\#P$. Our proof uses the random self-reducibility of the partition function, together with a control over the total variation distance for log-normal random variables in presence of a convex perturbation, and the Berlekamp-Welch algorithm.
\end{abstract}
\pagebreak
\tableofcontents
\section{Introduction}
The subject of this paper is the algorithmic hardness of the problem of exactly computing the partition function associated with the Sherrington-Kirkpatrick (SK) model of spin glasses, a mean field model that was first introduced by Sherrington and Kirkpatrick in 1975 \cite{sherrington1975solvable}, to propose a solvable model for the 'spin-glass' phase, an unusual magnetic behaviour predicted to occur in spatially random physical systems. The model is as follows. Fix a positive integer $n$, and consider $n$ sites $i\in\{1,2,\dots,n\}$, a naming motivated from a site of a magnet. To each site $i$, assign a spin, $\sigma_i\in\{-1,1\}$, and define the energy Hamiltonian $H(\boldsymbol{\sigma})$ for this spin configuration $\boldsymbol{\sigma}=(\sigma_i:1\leq i\leq n)\in\{-1,1\}^n$ via $H(\boldsymbol{\sigma})=\frac{\beta}{\sqrt{n}}\sum_{1\leq i<j\leq n}J_{ij}\sigma_i\sigma_j$,
where the parameters ${\bf J}=(J_{ij}:1\leq i<j\leq n)\in \mathbb{R}^{n(n-1)/2}$ are called spin-spin interactions (or shortly, couplings), and the parameter $\beta$ is called the inverse temperature. The associated partition function is given by, $Z({\bf J,\beta}) = \sum_{\boldsymbol{\sigma}\in\{-1,1\}^n}\exp\left(-\frac{\beta}{\sqrt{n}}\sum_{1\leq i<j\leq n}J_{ij}\sigma_i\sigma_j\right)$.
The SK model corresponds to the case, where the couplings $J_{ij}$ are iid standard normal; and the partition function, $Z({\bf J,\beta})$ carries useful information about the underlying physical system. The SK model is a \emph{mean-field} model of spin glasses, namely the interaction between any two distinct sites, $1\leq i<j\leq n$, is modeled  with random coupling parameters $J_{ij}$, which do not depend on the spatial location of $i$ and $j$. The rationale behind the scaling $\sqrt{n}$ is to ensure that the average energy per spin is roughly independent of $n$, and consequently, the free energy limit, $\lim_n n^{-1}\log Z({\bf J,\beta})$ is non-trivial. 
Despite the simplicity of its formulation, it turns out that the SK model is highly non-trivial to study, and that, analyzing the behaviour of a more elaborate model (such as, a model where the spatial positions of the sites are incorporated, by modelling them as the vertices of $\mathbb{Z}^2$ and the couplings are modified to be position-dependent) is really difficult. For a more detailed discussion on these and related issues, see the monographs by Panchenko \cite{panchenko2013sherrington} and Talagrand \cite{talagrand2010mean}.

In the first part of this paper, we focus on the SK model with the (random) external field, which was studied by Talagrand \cite{talagrand2010mean} (equation 1.61 therein), namely, the model, where the energy Hamiltonian is given by,
\begin{equation}\label{eqn:external-model}
H(\boldsymbol{\sigma})=\frac{\beta}{\sqrt{n}}\sum_{1\leq i<j\leq n}J_{ij}\sigma_i\sigma_j + \sum_{i=1}^n A_i\sigma_i.
\end{equation}
Here, the iid standard normal random variables ${\bf J}=(J_{ij}:1\leq i<j\leq n)\in\R^{n(n-1)/2}$ are the couplings, and the independent zero-mean normal random variables, ${\bf A}=(A_i:i\in[n])\in \R^n$ incorporate the external field contribution. To address this model we study the following equivalent model with the energy Hamiltonian:
\begin{equation}\label{eqn:model}
H(\boldsymbol{\sigma})=\frac{\beta}{\sqrt{n}}\sum_{1\leq i<j\leq n}J_{ij}\sigma_i\sigma_j + \sum_{i=1}^n B_i\sigma_i -\sum_{i=1}^n C_i \sigma_i,
\end{equation}
where ${\bf J}=(J_{ij}:1\leq i<j\leq n)\in \R^{n(n-1)/2}$ are the couplings as above, ${\bf B}=(B_i:i\in[n])\in\R^n$ and ${\bf C}=(C_i:i\in[n])\in\R^n$ are independent zero-mean normal random variables, which we still refer to as external field components.
Observe that, if $\mathcal{A}_1$ is an oracle, which, for input $({\bf J,A})$, computes the partition function for the model whose Hamiltonian is given by (\ref{eqn:external-model}), then $\mathcal{A}_1$ with input $({\bf J,B-C})$ computes the partition function of the model whose Hamiltonian is given by (\ref{eqn:model}). Similarly, if $\mathcal{A}_2$ is an oracle, which, for input $({\bf J,B,C})$ computes the partition function of the model in (\ref{eqn:model}), then $\mathcal{A}_2$ with input, $({\bf J,\frac{A+G}{2},\frac{G-A}{2}})$, where ${\bf G}=(G_i:i\in [n])$ is an iid copy of the random vector ${\bf A}=(A_i:i\in[n])$, computes the partition function of the model in (\ref{eqn:external-model}), recalling that if $A_i$ and $G_i$ are iid Gaussian random variables, then $A_i+G_i$ and $A_i-G_i$ are independent. 
In spite of being equivalent, the model in (\ref{eqn:model}), however, is more convenient to work with, in particular, for establishing a a certain downward self-recursive formula which expresses the partition function of an $n$-spin SK model as a weighted sum of the partition functions of two $(n-1)$-spin SK models, with properly adjusted external field components. 

The algorithmic problem is the problem of computing the partition function $Z({\bf J,B,C})$ associated to the modified model in (\ref{eqn:model}), when $({\bf J,B,C})\in\mathbb{R}^{n(n-1)/2+2n}$ is given as a (random) input. The (worst-case) algorithmic problem of computing $Z({\bf J,B,C})$ for an arbitrary input $({\bf J,B,C})$ is known to be \#P-hard for a much broader class of statistical physics models and associated partition functions, see e.g. \cite{barahona1982computational} and \cite{istrail2000statistical}. On the other hand, the classical reduction techniques that are used for establishing worst-case hardness do not seem to transfer to the problems with random inputs. The subject of this paper is the case of Gaussian random inputs, $({\bf J,B,C})$. The computational model that we adopt in the first part of the paper is the finite-precision arithmetic, and therefore the real-valued vector $({\bf J,B,C})$ cannot be used as a formal algorithmic input. In order to handle this issue, we consider a model, where the algorithm designer first selects a level $N$ of digital precision, and the values of $J_{ij},B_i,C_i$, or more concretely, $\widehat{J}_{ij}=\exp(\frac{\beta J_{ij}}{\sqrt{n}}),\widehat{B}_{i}=\exp(B_i)$, and $\widehat{C}_{i}=\exp(C_i)$ are computed, up to this selected level $N$ of digital precision: $\widehat{J}_{ij}^{[N]}, \widehat{B}_i^{[N]}$, and $\widehat{C}_i^{[N]}$, where $x^{[N]}=2^{-N}\lfloor 2^N x\rfloor$. The task of the algorithm designer is to exactly compute the partition function, associated with the input $(\widehat{J}_{ij}^{[N]}:1\leq i<j\leq n)$, $(\widehat{B}_i^{[N]}:1\leq i\leq n)$, and $(\widehat{C}_i^{[N]}:1\leq i\leq n)$ in polynomial (in $n$) time.

The main result of under the aforementioned assumptions is as follows. Let $k>0$ be any arbitrary constant. If there exists a polynomial time algorithm, which computes the partition function exactly with probability at least $1/n^k$, then $P=\#P$. Here, the  probability is taken with respect to the randomness of $({\bf J,B,C})$. To the best of our knowledge, this is the first result establishing formal algorithmic hardness of a computational problem arising in the field of spin glasses.

The approach we pursue here aims at capturing a \emph{worst-case} to \emph{average-case} reduction, and is similar to  establishing the average-case hardness of other problems involving counting, such as the problem of counting cliques in \ER hypergraphs \cite{adsera2019average}, or the problem of computing the permanent of a matrix modulo $p$, with entries chosen independently and uniformly over finite field $\mathbb{Z}_p$. Lipton observed in \cite{lipton1989new} that, for  a suitably chosen prime $p$, the permanent of a matrix can be expressed as a univariate polynomial, generated using integer multiples of a random uniform input. Hence, provided this polynomial can be recovered, the permanent of any \emph{arbitrary} matrix can be computed. Therefore, the average-case hardness of computing the permanent of a matrix modulo $p$ equals the worst-case hardness of the same problem, which is known to be \#P-hard. Lipton proves his result, by assuming there exists an algorithm, which correctly computes the permanent for at least $1-O(1/n)$ fraction of matrices over $\mathbb{Z}_p^{n\times n}$. Subsequent research weakened this assumption to the existence of an algorithm with constant probability of success \cite{feige1992hardness}, and finally, to the existence of an algorithm with inverse polynomial probability ($1/n^{O(1)}$) of success, Cai et al. \cite{cai1999hardness}, a regime, which is also our focus.
The proof technique that we follow is similar to that of Cai et al. \cite{cai1999hardness}
, and is built upon earlier ideas from Gemmell and Sudan \cite{gemmell1992highly}, Feige and Lund \cite{feige1992hardness}, and Sudan \cite{sudan1996maximum}. 

More specifically, the argument of Cai et al. \cite{cai1999hardness} is as follows. The permanent of a given matrix $M\in\mathbb{Z}_p^{n\times n}$ equals, via Laplace expansion, a weighted sum of the permanents of $n$ minors $M_{11},M_{21},\dots,M_{n1}$ of $M$, each of dimension $n-1$. Then a certain matrix polynomial is constructed, whose value at $k$ is equal to $M_{k1}$, by incorporating two random matrices, independently generated from the uniform distribution on $\mathbb{Z}_p^{(n-1)\times (n-1)}$. The permanent of this matrix polynomial is a univariate polynomial over a finite field with a known upper bound on its degree, and the problem boils down to recovering this polynomial from a list of pairs of numbers intersecting the graph of the polynomial at sufficiently many points. This, in fact, is a standard problem in coding theory, and the recovery of this polynomial is achieved by a list-decoding algorithm by Sudan \cite{sudan1996maximum}, which is an improved version of Berlekamp-Welch decoder. 

The method that we use follows the proof technique of Cai et al. \cite{cai1999hardness}, with several additional modifications. First, to avoid dealing with correlated random inputs, we reduce the problem of computing the partition function of the model in (\ref{eqn:model}) to computing the partition function of a different object, where the underlying cuts and polarities induced by the spin assignment $\boldsymbol{\sigma}\in\{-1,1\}^n$ are  incorporated. 
Second, a downward self-recursion formula for computing the partition function, analogous to Laplace expansion for permanent, is established; and this is the rationale for using the aforementioned equivalent model whose Hamiltonian is given by (\ref{eqn:model}). This is achieved by recursing downward with respect to the sign of $\sigma_n$, and expressing the partition function of an $n$-spin system, with a weighted sum of the partition functions of two $(n-1)$-spin systems, with appropriately adjusted external field components. Third, recalling that, we are interested in the case of random Gaussian inputs, we establish a probabilistic coupling between truncated version of log-normal distribution, and uniform distribution modulo a large prime $p$. Towards this goal, we establish that the log-normal distribution is "sufficiently" Lipschitz in a small interval and near-uniform modulo $p$. Finally, we also need to connect modulo $p$ computation to the exact computation of the partition function, in the sense defined above, i.e., truncating the inputs up to a certain level $N$  of digital precision, and computing the associated partition function. This is  achieved by using a  standard Chinese remaindering argument: Take  prime numbers $p_1,\dots,p_K$, compute $Z\pmod{p_i}$, for every $i$, and use this information to compute  compute $Z\pmod{P}$ where $P=\prod_{k=1}^K p_k$, via Chinese  remaindering. Provided $P>Z$, $Z\pmod{p}$ is precisely $Z$. The existence of sufficiently many such primes of appropriate size that we can work with is justified through the prime number theorem.

In the second part of the paper, we focus on the same problem without the external field component, but this time under the {\em real-valued} computational model. We recall the model for convenience. First, generate iid standard normal random variables, $J_{ij}$; and let the elements of the sequence ${\bf J}=(J_{ij}:1\leqslant i<j\leqslant n)\in \R^{\frac{n(n-1)}{2}}$ be the couplings. For each spin configuration $\boldsymbol{\sigma}=(\sigma_i:1\leqslant i\leqslant n)\in \{-1,1\}^n$, define the associated energy Hamiltonian  $H(\boldsymbol{\sigma})=\sum_{i<j}J_{ij}\sigma_i\sigma_j$. The algorithmic question of interest is the exact computation of the associated partition function, namely the object,
$$
Z({\bf J}) = \sum_{\boldsymbol{\sigma}\in\{-1,1\}^n} \exp\left(-\sum_{1\leqslant i<j\leqslant n}J_{ij}\sigma_i\sigma_j\right) = \sum_{\boldsymbol{\sigma}\in\{-1,1\}^n}\exp(-H(\boldsymbol{\sigma})),
$$
using the real-valued computational model, {\em e.g.} a Blum-Shub-Smale machine \cite{blum1988theory} operating over real-valued inputs, as opposed to the previous setting, where the computational engine performs floating point operations. 
The input vector, namely the vector of real-valued couplings ${\bf J}\in\R^{n(n-1)/2}$, is given as a random input. Albeit the usual definition of the partition function involves also the inverse temperature parameter $\beta$, and a normalization factor by $\sqrt{n}$; we suppress these in order to keep the discussion simple. 

The main result towards this direction is as follows. If there exists a polynomial time algorithm, which computes the partition function exactly with probability at least $3/4+1/{\rm poly}(n)$ under real-valued computational model, then $P=\#P$. Similar to the previous setting, the probability here is taken with respect to the randomness in the input of the algorithm, namely, with respect to the distribution of ${\bf J}$. 

The techniques of the previous setting (finite-precision arithmetic) do not, however, transform to real-valued computational model, since the finite field structure $\mathbb{Z}_p$ utilized for the proof is lost, upon passing to real-valued computation model. We bypass this obstacle by building on the argument of Aaronson and Arkhipov \cite{aaronson2011computational}, where they established the average-case hardness of the exact computation of the permanent of a random matrix with iid Gaussian entries.

We close this section with the set of notational convention. The set of integers and positive integers are respectively denoted by $\mathbb{Z}$ and $\mathbb{Z}^+$. The set, $\{1,2,\dots,n\}$ is denoted by $[n]$, and the set $\{0,1,\dots,p-1\}$, namely the set of all residues modulo $p$, is denoted by $\mathbb{Z}_p$. Given a real number $x$, the largest integer not exceeding $x$ is denoted by $\lfloor x\rfloor$. We say $a\equiv b\pmod{p}$, if $p$ divides $a-b$, abbreviated as $p\mid a-b$. Given an $x>0$, $\log x$ denotes logarithm of $x$, base $2$. Given a (finite) set $S$, denote the number of elements (i.e., the cardinality) of $S$  by $|S|$. Given a finite field $\mathbb{F}$, denote by $\mathbb{F}[x]$ the set of all (finite-degree) polynomials, whose coefficients are from $\mathbb{F}$. Namely, $f\in\mathbb{F}[x]$ if there is a positive integer $n$, and $a_0,\dots,a_n\in\mathbb{F}$, such that for every $x\in\mathbb{F}$, $f(x)=\sum_{k=0}^n a_k x^k$. The degree of $f\in\mathbb{F}[x]$ is, ${\rm deg}(f)=\max\{0\leq k\leq n:a_k\neq 0\}$. For two random variables $X$ and $Y$, the total variation distance between (the distribution functions of) $X$ and $Y$ is denoted by $d_{TV}(X,Y)$. For any given vector $v\in\R^d$, we denote by $\|v\|$ the Euclidean norm of $v$, that is, $\sqrt{\sum_{i=1}^d v_i^2}$. $\Theta(\cdot),O(\cdot)$, $o(\cdot)$, and $\Omega(\cdot)$ are standard (asymptotic) order notations for comparing the growth of two sequences. Finally, we use the words oracle and algorithm interchangeably in the sequel, and denote them by $\mathcal{O}$ and $\mathcal{A}$. These objects will be assumed to exist for the sake of proof purposes.

\section{Average-Case Hardness under Finite-Precision Arithmetic}
\subsection{Model and the Main Result}
Our focus is on computing the partition function of the model, whose Hamiltonian for a given spin configuration $\boldsymbol{\sigma}\in\{-1,1\}^n$ at inverse temperature $\beta$ is given by:
\begin{align*}
    H(\boldsymbol{\sigma})= \frac{\beta}{\sqrt{n}}\sum_{1\leq i<j\leq n}J_{ij}\sigma_i \sigma_j+\sum_{i=1}^n B_i\sigma_i-\sum_{i=1}^n C_i \sigma_i,
\end{align*}
where the random variables ${\bf J}=(J_{ij}:1\leq i<j\leq n)\in \R^{n(n-1)/2}$ are the couplings; and the random variables ${\bf B}=(B_i:i\in[n])\in \R^n$, and ${\bf C}=(C_i:i\in[n])\in \R^n$ are the external field components. For simplicity, we study the case, where $({\bf J,B,C})$ consists of iid standard normal entries. However, our analysis also applies to the case, where $({\bf J,B,C})$ consists of independent normal entries with zero-mean and possibly different variances. Associated partition function (at the temperature $1/\beta$) reads as:
$$
Z({\bf J,B,C})=\sum_{\boldsymbol{\sigma}\in\{-1,1\}^n}\exp(-H(\boldsymbol{\sigma})).
$$
We now incorporate the cuts and polarities induced by $\boldsymbol{\sigma}\in\{-1,1\}^n$. Observe that,
\begin{align*}
    H(\boldsymbol{\sigma})
    &= \frac{\beta}{\sqrt{n}}\sum_{i<j,\sigma_i=\sigma_j}J_{ij}+\sum_{i=1,\sigma_i=+1}^n B_i+ \sum_{i=1,\sigma_i=-1}^n C_i 
    &-\left(\frac{\beta}{\sqrt{n}} \sum_{i<j,\sigma_i\neq \sigma_j}J_{ij}+\sum_{i=1,\sigma_i=-1}^n B_i+ \sum_{i=1,\sigma_i=+1}^n C_i\right).
\end{align*}
For convenience, we will denote the first part above by $\Sigma_{\boldsymbol{\sigma}}^+$, and the second part inside the brackets by $\Sigma_{\boldsymbol{\sigma}}^-$. Observe that, the object, $\Sigma\triangleq \sum_i B_i+\sum_i C_i+\frac{\beta}{\sqrt{n}}\sum_{i<j}J_{ij}=\Sigma_{\boldsymbol{\sigma}}^+ + \Sigma_{\boldsymbol{\sigma}}^-$, is independent of $\boldsymbol{\sigma}$, and trivially computable. Now, note that, $\Sigma-H(\boldsymbol{\sigma})=2\Sigma_{\boldsymbol{\sigma}}^-$, and therefore,
$$
Z({\bf J,B,C})=\sum_{\boldsymbol{\sigma}\in\{-1,1\}^n} \exp(-\Sigma)\exp(2\Sigma_{\boldsymbol{\sigma}}^-).
$$
Namely, $Z({\bf J,B,C})$ is computable, if and only if, $\sum_{\boldsymbol{\sigma}\in\{-1,1\}^n} \exp(2\Sigma_{\boldsymbol{\sigma}}^-)$ is computable. The presence of the factor $2$ is, again, a minor detail that we omit in the sequel, since our techniques transfer without any modification. Thus, our focus is on computing $\sum_{\boldsymbol{\sigma}\in\{-1,1\}^n} \exp(\Sigma_{\boldsymbol{\sigma}}^{-})$; and denoting $\exp(\beta J_{ij}/\sqrt{n})$ by $\widehat{J_{ij}}$, $\exp(B_i)$ by $\widehat{B_i}$, and $\exp(C_i)$ by $\widehat{C_i}$, the object we are interested in computing is given by,
$$
Z({\bf \widehat{J},\widehat{B},\widehat{C}})=\sum_{\boldsymbol{\sigma}\in \{-1,1\}^n}\left(\prod_{i:\sigma_i=-}\widehat{B_i} \right)\left(\prod_{i:\sigma_i=+}\widehat{C_i}\right)\left(\prod_{i<j:\sigma_i\neq \sigma_j}\widehat{J_{ij}}\right).
$$
Our focus is on algorithms, that can compute $Z({\bf \widehat{J},\widehat{B},\widehat{C}})$ exactly, in the following sense. The algorithm designer first selects a certain level $N$ of digital precision, and computes these numbers, up to the selected precision level. Given a real number $x\in\mathbb{R}$, let $x^{[N]} = 2^{-N}\lfloor 2^N x\rfloor$ be the number obtained by keeping only first $N$ binary bits of $x$ after the binary point. The computational goal of the algorithm designer is to compute $Z({\bf \widehat{J}^{[N]},\widehat{B}^{[N]},\widehat{C}^{[N]}})$ exactly, where ${\bf \widehat{J}^{[N]}}=(\widehat{J_{ij}}^{[N]}:1\leq i<j\leq n)$, ${\bf \widehat{B}^{[N]}} = (\widehat{B_i}^{[N]}:i\in[n])$, and ${\bf \widehat{C}^{[N]}}= (\widehat{C_i}^{[N]}:i\in[n])$.

We now switch to a model with integer inputs. For convenience, let $\widetilde{J_{ij}}=\lfloor 2^N \widehat{J_{ij}} \rfloor= 2^N\widehat{J_{ij}}^{[N]}$, $\widetilde{B_i}=\lfloor 2^N \widehat{B_i} \rfloor= 2^N\widehat{B_i}^{[N]}$,  $\widetilde{C_i}=\lfloor 2^N \widehat{C_i} \rfloor= 2^N\widehat{C_i}^{[N]}$; and define $f(n,\boldsymbol{\sigma})$ to be
\begin{equation}\label{eq:f-of-n-sigma}
f(n,\boldsymbol{\sigma})=\frac{n(n-1)}{2}-n-I_n(\boldsymbol{\sigma}),
\end{equation}
where $I_n(\boldsymbol{\sigma})=|\{(i,j):\sigma_i\neq \sigma_j,1\leq i<j\leq n\}|$. Equipped with this, we will focus on computing the following object with integer-valued inputs,
\begin{equation}\label{eqn:main-pf}
Z_n({\bf \widetilde{J},\widetilde{B},\widetilde{C}})=\sum_{\boldsymbol{\sigma}\in \{-1,1\}^n}2^{Nf(n,\boldsymbol{\sigma})}\left(\prod_{i:\sigma_i=-}\widetilde{B_i} \right)\left(\prod_{i:\sigma_i=+}\widetilde{C_i}\right)\left(\prod_{i<j:\sigma_i\neq \sigma_j}\widetilde{J_{ij}}\right),
\end{equation}
where the subscript $n$ highlights the dependence on $n$, indicating that the system consists of $n$ spins. Observe that, $Z_n({\bf \widetilde{J},\widetilde{B},\widetilde{C}})=2^{Nn(n-1)/2} Z({\bf \widehat{J}^{[N]},\widehat{B}^{[N]},\widehat{C}^{[N]}})$. As a sanity check, note that $|I_n(\boldsymbol{\sigma})|+n\leq \max_{0<k<n} k(n-k) +n <n(n-1)/2$ for $n>6$, and every $\boldsymbol{\sigma}\in\{-1,1\}^n$. Thus the model is indeed integral-valued. 

We now state our main result, for the average-case hardness of computing $Z_n({\bf \widetilde{J},\widetilde{B},\widetilde{C}})$.

\begin{theorem}\label{thm:main}
Let $k,\alpha>0$ be arbitrary fixed constants. Suppose that, the precision value $N$ satisfies, $C(\alpha,k) \log n\leq N \leq n^\alpha$, where $C(\alpha,k)$ is a constant, depending only on $\alpha$ and $k$. Suppose that there exists a polynomial in $n$ time algorithm $\mathcal{A}$, which on input $({\bf \widetilde{J},\widetilde{B},\widetilde{C}})$ produces a value $Z_{\mathcal{A}}({\bf \widetilde{J},\widetilde{B},\widetilde{C}})$ satisfying
$$
\mathbb{P}(Z_{\mathcal{A}}({\bf \widetilde{J},\widetilde{B},\widetilde{C}})=Z_n({\bf \widetilde{J},\widetilde{B},\widetilde{C}}))\geq \frac{1}{n^k},
$$
for all sufficiently large $n$, where $Z_n({\bf \widetilde{J},\widetilde{B},\widetilde{C}})$ is defined in (\ref{eqn:main-pf}). Then, $P=\#P$.
\end{theorem}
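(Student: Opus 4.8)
The overall strategy is a worst-case to average-case reduction following the template of Cai et al.\ \cite{cai1999hardness}, adapted to the partition function $Z_n$. First I would establish the \emph{downward self-reducibility}: conditioning on the sign of $\sigma_n$ expresses $Z_n({\bf \widetilde{J},\widetilde{B},\widetilde{C}})$ as a weighted sum of two instances of $Z_{n-1}$ on $(n-1)$-spin systems, where the couplings of spin $n$ to the rest are folded into the external-field components $\widetilde{B},\widetilde{C}$ of the smaller systems (this is precisely the reason the paper works with the Hamiltonian \eqref{eqn:model} rather than \eqref{eqn:external-model}). This plays the role of Laplace expansion for the permanent. Iterating it $n$ times reduces computing $Z_n$ on an arbitrary (worst-case) integer input to computing a polynomial-in-$n$ number of $Z_1$-type base cases, provided each intermediate call can be answered.

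Next comes the \emph{random self-reducibility} modulo a prime $p$. Working over $\mathbb{Z}_p$ for a suitably large prime $p$, I would exhibit a low-degree curve $x\mapsto (\widetilde{J}(x),\widetilde{B}(x),\widetilde{C}(x))$ through a given worst-case input, built using fresh uniform random entries over $\mathbb{Z}_p$, so that (i) $Z_n$ evaluated along the curve is a univariate polynomial in $x$ over $\mathbb{Z}_p$ of known polynomial degree (here one uses the explicit product/sum structure of \eqref{eqn:main-pf}: each $\widetilde{J}_{ij}(x),\widetilde{B}_i(x),\widetilde{C}_i(x)$ is linear in $x$, and $f(n,\boldsymbol\sigma)$ contributes a fixed power of $2$), and (ii) for $x$ ranging over a large set of field elements, the evaluation points are pairwise independent and each nearly uniform. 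The key distributional ingredient, stated in the introduction, is the \emph{near-uniformity modulo $p$ of the truncated log-normal distribution} together with its Lipschitz/smoothness on short intervals: this is what lets me couple the actual random input $({\bf \widetilde J,\widetilde B,\widetilde C})$ — whose entries are $\lfloor 2^N e^{\text{Gaussian}}\rfloor$ — with the uniform-over-$\mathbb{Z}_p$ distribution along which the self-reduction is performed, losing only $o(n^{-k})$ in total variation. Given the hypothesized algorithm $\mathcal{A}$ succeeding with probability $n^{-k}$, running it on the $\approx p$ curve points and applying Chebyshev yields agreement with the true polynomial on a $\gtrsim n^{-k}$ fraction of points; Sudan's list-decoding algorithm \cite{sudan1996maximum} then produces a polynomial-length list of candidate polynomials containing $Z_n$ along the curve, and one extracts the correct one by the Cai et al.\ trick of finding a field element $v$ where all listed polynomials disagree and recursing (the recursion is well-founded because it reduces to the same problem at dimension $n-1$, matching the downward step). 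Finally, to pass from modular to exact integer computation, I would pick $K = \mathrm{poly}(n)$ primes $p_1,\dots,p_K$ — the prime number theorem guarantees enough of the right size, with the lower bound $N \geq C(\alpha,k)\log n$ ensuring the primes and hence the ambient bit-lengths can be taken large enough — compute $Z_n \bmod p_i$ for each, and Chinese-remainder to recover $Z_n \bmod \prod p_i$; since $\log Z_n = O(n^2 N) = O(n^{2+\alpha})$ is polynomial, taking $\prod p_i > Z_n$ recovers $Z_n$ exactly. Computing $Z_n$ for all worst-case inputs in (randomized) polynomial time then collapses $\#P$ into $P$ in the appropriate sense, yielding $P=\#P$.

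I expect the \textbf{main obstacle} to be the coupling step: the self-reducibility argument intrinsically wants uniform inputs over $\mathbb{Z}_p$, but the actual inputs are truncated log-normals, so one must quantitatively control $d_{TV}$ between $\widetilde{J}_{ij} \bmod p$ (etc.) and the uniform distribution on $\mathbb{Z}_p$, uniformly over all $p$ and $N$ in the allowed ranges, and simultaneously ensure that the curve construction keeps the evaluation points jointly close to the product of such distributions while remaining pairwise independent. Getting the error small enough to survive multiplication by the $p$ curve points and by the $n$ recursion levels — all while $N$ may be as small as $\Theta(\log n)$ — is the delicate part and dictates the precise form of the constant $C(\alpha,k)$ in the hypothesis. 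A secondary technical point is verifying the degree bound on the permanent-analogue polynomial and that the $2^{Nf(n,\boldsymbol\sigma)}$ factors do not disrupt the polynomial structure modulo $p$ (one needs $p \nmid 2$, which is automatic for odd primes).
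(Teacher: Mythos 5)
Your proposal is correct and follows essentially the same route as the paper: downward self-reducibility in place of Laplace expansion, a pairwise-independent random curve over $\mathbb{Z}_{p}$ with Chebyshev plus Sudan's list decoder and the disagreement-point trick, Chinese remaindering justified by the prime number theorem, and a coupling of the truncated log-normal inputs to uniform inputs modulo $p$ via Lipschitz/near-uniformity estimates. The only organizational difference worth noting is that the paper performs the coupling exactly once at the top level (transferring the algorithm's $1/n^{k}$ success probability to uniform inputs and then running the whole reduction with fresh, exactly uniform curve randomness), so the "delicate" interaction you anticipate between the coupling and the curve points does not actually arise.
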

Quantitatively, the constant $C(\alpha,k)$ can be taken as $3\alpha+21k/2+10+\epsilon$, where $\epsilon>0$ is arbitrary. The probability in Theorem \ref{thm:main}  is taken with respect to the randomness of $({\bf \widetilde{J},\widetilde{B},\widetilde{C}})$, which, in turn, is derived from the randomness of $({\bf J,B,C})$. The logarithmic lower bound on the number of bits is imposed to address the technical issues when establishing the near-uniformity of the random variables $({\bf \widetilde{J},\widetilde{B},\widetilde{C}})$ modulo an appropriately chosen prime. The upper bound on the number of bits that we retain is for ensuring that the input to the algorithm is of polynomial length in $n$.

\subsection{Proof of Theorem \ref{thm:main}}
For any given ${\bf \Xi}\in\mathbb{Z}^{n(n-1)/2+2n}$ (note that, any algorithm computing the partition function of an $n$-spin system with external field accepts an input of size $n(n-1)/2 + 2n$), let $Z_n({\bf \Xi},p_n)\in\mathbb{Z}_{p_n}$ denotes $Z_n({\bf \Xi})\pmod{p_n}$, and similarly let $Z_{\mathcal{A}}({\bf \Xi};p_n)$ denotes $Z_{\mathcal{A}}({\bf \Xi})\pmod{p_n}$. Let ${\bf U}\in\mathbb{Z}_{p_n}^{n(n-1)/2+2n}$ be a random vector, consisting of iid entries, drawn independently  from uniform distribution on $\mathbb{Z}_{p_n}$. 
The following result is  our main proposition, and establishes the average-case hardness of computing the partition function defined in (\ref{eqn:main-pf}) modulo $p_n$, when the entry to the algorithm is ${\bf U}$. 
This, together with a coupling argument will establish Theorem \ref{thm:main}.

\begin{proposition}\label{prop-main}
Let $k>0$ be an arbitrary constant. Suppose $\mathcal{A}$ is a polynomial in $n$ time algorithm, which for any positive integer $n$, any prime number $p_n\geq 9 n^{2k+2}$, and any input ${\bf a}=({\bf a^J,a^B,a^C})\in \mathbb{Z}_{p_n}^{n(n-1)/2 + 2n}$ produces some output $Z_{\mathcal{A}}({\bf a};p_n) \in \mathbb{Z}_{p_n}$; and satisfies
$$
\mathbb{P}(Z_{\mathcal{A}}({\bf U};p_n)=Z_n({\bf U};p_n))\geq \frac{1}{n^k},
$$
where ${\bf U}=({\bf U^J, U^B,U^C})\in\mathbb{Z}_{p_n}^{n(n-1)/2+2n}$ consists of iid entries chosen uniformly at random from $\mathbb{Z}_{p_n}$, and the probability is taken with respect to the randomness in ${\bf U}$. Then, $P=\#P$.
\end{proposition}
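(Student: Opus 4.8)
The plan is to follow the worst-case to average-case reduction of Cai et al. \cite{cai1999hardness}, with the Laplace expansion replaced by the downward self-recursion for the partition function described earlier in the paper. Suppose $\mathcal{A}$ computes $Z_n(\cdot;p_n)$ correctly on a $1/n^k$ fraction of uniform inputs from $\mathbb{Z}_{p_n}^{n(n-1)/2+2n}$. We will show that $\mathcal{A}$ can be bootstrapped into a randomized procedure that, with high probability, computes $Z_n({\bf \Xi};p_n)$ for an \emph{arbitrary} (worst-case) input ${\bf \Xi}$; since exact computation of the partition function is $\#P$-hard in the worst case (by Chinese remaindering over sufficiently many primes, together with the known worst-case $\#P$-hardness of spin-glass partition functions, e.g. \cite{barahona1982computational,istrail2000statistical}), this yields $P=\#P$.

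The core of the argument is a one-step self-reduction from size $n$ to size $n-1$ that is robust to the low success probability of $\mathcal{A}$. First I would record the downward recursion: recursing on the sign of $\sigma_n$ expresses $Z_n({\bf \widehat J,\widehat B,\widehat C})$ as a weighted sum of two $(n-1)$-spin partition functions with external-field components adjusted by absorbing the edges incident to $n$; in the integer/modular setting this is an identity in $\mathbb{Z}_{p_n}$. Then, given a worst-case instance, I construct for each of the two $(n-1)$-spin instances a \emph{curve} of instances over $\mathbb{Z}_{p_n}$: a low-degree (degree $O(1)$, in fact degree $\le 3$ or so) polynomial matrix/vector $D(x)$ in the input parameters such that $D(x_0)$ is the target instance at a designated point $x_0$, while the randomization is chosen so that $D(x)$ for $x$ ranging over $\mathbb{Z}_{p_n}\setminus\{\text{a few bad points}\}$ are \emph{pairwise independent} and each marginally uniform on $\mathbb{Z}_{p_n}^{(n-1)(n-2)/2+2(n-1)}$. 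Crucially $Z_{n-1}(D(x);p_n)$ is then a univariate polynomial in $x$ over $\mathbb{Z}_{p_n}$ of degree bounded by some fixed polynomial in $n$ — this is where the condition $p_n \ge 9n^{2k+2}$ is used, to keep $p_n$ comfortably larger than this degree while still polynomial in $n$. Running $\mathcal{A}$ on the $\sim p_n$ points of this curve, pairwise independence plus Chebyshev's inequality guarantees that with high probability $\mathcal{A}$ agrees with $Z_{n-1}(D(x);p_n)$ on at least $\sim p_n/(2n^k)$ points. By the Feige–Lund inclusion–exclusion count \cite{feige1992hardness}, only polynomially many degree-$\mathrm{poly}(n)$ polynomials can agree with the resulting list on that many points, so Sudan's list-decoding algorithm \cite{sudan1996maximum} recovers, in randomized polynomial time, a polynomial-size list provably containing $Z_{n-1}(D(x);p_n)$. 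To pick out the correct one, observe that the choice of $p_n$ forces a point $v$ (findable in polynomial time) at which all listed candidates disagree, so it suffices to know $Z_{n-1}(D(v);p_n)$ — an $(n-1)$-spin value — and then recurse. Unwinding the recursion from $n$ down to a constant size, the total error is a union bound over $O(n)$ stages, each failing with inverse-polynomial probability, so the whole computation succeeds with high probability.

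Two bookkeeping points complete the plan. One must check that the reduction in the modified model stays within the required input format: the adjusted external-field components after absorbing the edges incident to $n$ are still products of the original $\widehat{J},\widehat{B},\widehat{C}$ (equivalently sums in the exponent), so in the integer representation $\widetilde{J},\widetilde{B},\widetilde{C}$ the self-reduction is a polynomial identity with integer coefficients, hence makes sense mod $p_n$; and the $2^{Nf(n,\boldsymbol\sigma)}$ normalization factors telescope correctly between levels $n$ and $n-1$. Second, the base case of the recursion — a constant-size system — is computed directly in constant time.

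The main obstacle I anticipate is designing the curve $D(x)$ so that it \emph{simultaneously} (i) interpolates the worst-case target at $x_0$, (ii) produces exactly the uniform marginal distribution on $\mathbb{Z}_{p_n}^{(n-1)(n-2)/2+2(n-1)}$ at the other points — so that $\mathcal{A}$'s $1/n^k$ guarantee applies — and (iii) keeps pairwise independence across the evaluation points while still yielding a genuinely univariate polynomial $Z_{n-1}(D(x);p_n)$ of controlled degree. In the permanent case \cite{cai1999hardness} this is handled by a clever two-random-matrix interpolation; here the added external-field coordinates and the nonlinear structure of $Z_{n-1}$ (products over cut edges rather than a single determinant-like expansion) require re-deriving both the degree bound and the uniformity/pairwise-independence properties of the curve. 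Getting the degree bound tight enough relative to $p_n \ge 9n^{2k+2}$, and verifying that the downward recursion's coefficients do not introduce spurious dependence on $x$, are the places where care is needed.
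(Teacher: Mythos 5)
Your overall route is the same as the paper's: the downward self-recursion in place of Laplace expansion, a randomized low-degree curve of instances, Chebyshev via pairwise independence, the Feige--Lund counting bound, Sudan's list decoder, disambiguation at a point where all surviving candidates disagree, recursion down to a constant-size base case, and Chinese remaindering to remove the modulus. However, there is a genuine gap at the technical heart of the reduction: the curve construction. You propose to build a \emph{separate} curve for each of the two $(n-1)$-spin instances produced by the self-recursion, each curve interpolating its own target at a single designated point $x_0$. But each curve then needs its own disambiguation point $v$, i.e.\ its own fresh $(n-1)$-spin instance to be solved recursively; with two curves per level the recursion branches with factor $2$ at each of the $\Theta(n)$ levels, so the procedure makes $2^{\Theta(n)}$ calls and is not polynomial time. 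Your error accounting (``a union bound over $O(n)$ stages'') is only consistent with one recursive call per level, which your construction as described does not deliver.

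The fix is exactly the point you flag as an ``anticipated obstacle'' but do not resolve, and it is the one non-routine ingredient: a \emph{single} curve passing through both sub-instances, e.g.\ $D(x)=(2-x)v_1+(x-1)v_2+(x-1)(x-2)(K+xM)$ with $K,M$ uniform on $\mathbb{Z}_{p_n}^{(n-1)(n-2)/2+2(n-1)}$ (equation (\ref{eqn:d-of-x}) of the paper, following \cite{gemmell1992highly,cai1999hardness}), so that $D(1)=v_1$ and $D(2)=v_2$; recovering the single polynomial $\phi(x)=Z_{n-1}(D(x);p_n)$ then yields both needed values $\phi(1),\phi(2)$, and only the single instance $D(v)$ has to be solved at the next level. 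One must also actually prove that $D(x_1),D(x_2)$ are uniform and pairwise independent for distinct $x_1,x_2\in\{3,\dots,p_n\}$ (the paper's Lemma \ref{pairwise-Indep}: a two-equations-in-two-unknowns argument using that $(x-1)(x-2)$ is invertible modulo $p_n$ off $\{1,2\}$), and bound ${\rm deg}(\phi)<n^2$ so that $p_n\geq 9n^{2k+2}$ suffices both for the $|\mathcal{F}|\leq 3/q$ counting bound and for the $t>\sqrt{2Ld}$ requirement of Sudan's decoder. In your proposal these are stated as hopes rather than carried out; once the single-curve construction and its pairwise-independence proof are supplied, the rest of your outline matches the paper's argument.
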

\begin{proof}{(of Proposition \ref{prop-main})}
We will use as basis the \#P-hardness of computing the partition function, for arbitrary inputs. Namely, if there exists a polynomial time algorithm computing $Z({\bf j,b,c})$ for any arbitrary input ${\bf j,b,c}$ with probability bounded away from zero as $n\to\infty$, then P=\#P. 

Let $q\geq 1/n^k$ be the success probability of $\mathcal{A}$, and ${\bf a}=({\bf  a^J,a^B,a^C})\in\mathbb{Z}_{p_n}^{n(n-1)/2+2n}$ be an \emph{arbitrary} input, whose partition function we want to compute. For convenience, we drop  ${\bf a}$,  denote $({\bf a^J,a^B,a^C})$ by $({\bf J,B,C})$. The following lemma establishes the downward self-recursive behaviour of the partition function (modulo $p_n$) by expressing the partition function of an $n$-spin system as a weighted sum of partition functions of two $(n-1)$-spin systems, with appropriately adjusted external field components.
\begin{lemma}\label{lemma:self-recursion}
The following identity holds:
$$
Z_n({\bf J,B,C};p_n)=C_n' Z_{n-1}({\bf J',B^+,C^+};p_n) + B_n' Z_{n-1}({\bf J',B^-,C^-};p_n),
$$
where, $Z_n({\bf J,B,C};p_n)=Z_n({\bf J,B,C})\pmod{p_n}$ with $Z_n$ defined in (\ref{eqn:main-pf}); ${\bf J'}\in\mathbb{Z}_{p_n}^{(n-1)(n-2)/2}$ is such that $J'_{ij}=J_{ij}$ for every $1\leq i<j\leq n-1$; ${\bf B^+,B^-,C^+,C^-}\in \mathbb{Z}_{p_n}^{n-1}$ are such that $B^+_i = 2^{-N}B_i J_{in}$, $B^-_i = B_i$, $C^+_i=C_i$, and $C^-_i = 2^{-N}C_iJ_{in}$, for every $1\leq i\leq n-1$; and $C_n'=C_n2^{(n-2)N}$, $B_n'=B_n 2^{(n-2)N}$.
\end{lemma}
The proof of this lemma is provided in Section \ref{subsec:self-recursion}. Namely, provided we can compute $Z_{n-1}({\bf J',B^+,C^+};p_n)$ and $Z_{n-1}({\bf J',B^-,C^-};p_n)$, we can compute $Z_n({\bf J,B,C};p_n)$. Note that, since we are interested in modulo $p_n$ computation, the number $2^{-N}$ is nothing but $g^N$, where $g\in \mathbb{Z}_{p_n}$ satisfies $2g\equiv 1\pmod{p_n}$, that is, $g$ is the multiplicative inverse of $2$ modulo $p_n$. 

Next, let $v_1=({\bf J',B^+,C^+})\in\mathbb{Z}_{p_n}^{T}$, and $v_2=({\bf J',B^-,C^-})\in\mathbb{Z}_{p_n}^{T}$; where the input dimension $(n-1)(n-2)/2+2(n-1)$ of the algorithm computing partition function for a model with $(n-1)$-spins  is denoted by $T$ for convenience. Now, we construct the vector polynomial 
\begin{equation}\label{eqn:d-of-x}
D(x)=(2-x)v_1+(x-1)v_2+(x-1)(x-2)(K+xM),
\end{equation}
of dimension $T$, where $K,M$ are iid random vectors, drawn from uniform distribution on $\mathbb{Z}_{p_n}^{T}$. The incorporation of this extra randomness is due to an earlier idea by Gemmell and Sudan \cite{gemmell1992highly}. 

Next, consider 
$\phi(x)=Z_{n-1}(D(x);p_n)
$, namely, the partition function of an $(n-1)$-spin system, associated with the vector $D(x)$ (where the first $(n-1)(n-2)/2$ components correspond  to couplings, the following $(n-1)$ components correspond  to $B_i$'s, and the last $(n-1)$ components correspond to $C_i$'s), which is a univariate polynomial in $x$. We now upper bound the degree of $\phi(x)$. Note that,
$$
d={\rm deg}(\phi)\leq 3\left(\max_{\boldsymbol{\sigma}\in\{-1,1\}^{n-1}}|I(\boldsymbol{\sigma})| +n-1 \right)=3\left(\max_{1\leq k\leq n-1}k(n-1-k)+n-1\right)<n^2,
$$
for $n$ large. Observe also that, $\phi(1)=Z_{n-1}(D(1);p_n)=Z_{n-1}({\bf J',B^+,C^+};p_n)$,  $\phi(2)=Z_n(D(2);p_n)=Z_{n-1}({\bf J',B^-,C^-};p_n)$, hence, $Z_n({\bf J,B,C};p_n)=C_n'\phi(1)+B_n' \phi(2)$. Therefore, provided that we can recover $\phi(\cdot)$, $Z_n({\bf J,B,C};p_n)$ can be computed. With this, we now turn our attention to recovering the polynomial $\phi(\cdot)$. Let $\mathcal{D}$ be a set of cardinality $p_n-2$, defined as $\mathcal{D}=\{D(x):x=3,4,\dots,p_n\}$. We claim that, $\mathcal{D}$ consists of pairwise independent samples.
\begin{lemma}\label{pairwise-Indep}
For every distinct $x_1, x_2\in\{3,4,\dots,p_n\}$, the random vectors $D(x_1)$ and $D(x_2)$ are independent and uniformly distributed over $\mathbb{Z}_{p_n}^T$. That is, for every such $x_1,x_2$ and every $y_1,y_2\in\mathbb{Z}_{p_n}^{T}$; it holds that $\mathbb{P}(D(x_1)=y_1)=1/p_n^T=\mathbb{P}(D(x_2)=y_2)$, and
$$
\mathbb{P}(D(x_1)=y_1,D(x_2)=y_2)=1/p_n^{2T}=\mathbb{P}(D(x_1)=y_1)\mathbb{P}(D(x_2)=y_2),
$$
where the probability is taken with respect to the randomness in $K$ and $M$.
\end{lemma}
The proof of this lemma is provided in Section \ref{subsec:lemma-pairwise-indep}. Now, we run $\mathcal{A}$ on $\mathcal{D}$, and will use the independence to deduce via Chebyshev's inequality that, with high probability, $\mathcal{A}$ runs correctly, on at least $q/2$ fraction of inputs in $\mathcal{D}$, where $q\geq 1/n^k$ is the success probability of our algorithm. This is encapsulated by the following lemma.
\begin{lemma}\label{whp}
Let the random variable $\mathcal{N}$ be the number of points $D(x)\in\mathcal{D}$, such that $\mathcal{A}(D(x))=\phi(x)=Z_{n-1}(D(x);p_n)$, namely $\mathcal{A}$ correctly computes the partition function at $D(x)$. Then,
$$
\mathbb{P}(\N\geq (p_n-2)q/2)\geq 1-\frac{1}{(p_n-2)q^2},
$$
where $q$ is the success probability of $\mathcal{A}$, and the probability is taken with respect to the randomness in $\mathcal{D}$, which, in turn, is due to the randomness in $K$ and $M$.
\end{lemma}
The proof of this lemma can be found in Section \ref{subsec:lemma-whp}. Now, let $G(f)=\{(x,f(x)):x=1,2,\dots,p_n\}$ be the graph of a function $f\in\mathbb{Z}_{p_n}[x]$. Define the set $\mathcal{S}=\{(x,\mathcal{A}(D(x))):x=3,4,\dots,p_n\}$, and let $\mathcal{F}$ be a set of polynomials, defined as,
$$
\mathcal{F}=\{f\in\mathbb{Z}_{p_n}[x]:{\rm deg}(f)<n^2,|G(f)\cap \mathcal{S}|\geq (p_n-2)q/2\}.
$$
Namely, $f\in\mathcal{F}$ if and only if, its coefficients are from $\mathbb{Z}_{p_n}$, it is of degree at most $n^2-1$; and its graph intersects the set $\mathcal{S}$ on at least $(p_n-2)q/2$ points. Due to Lemma \ref{whp}, we know that $\phi(x)\in \mathcal{F}$, with probability at least $1-\frac{1}{(p_n-2)q^2}$. 
We now show that this set $\mathcal{F}$ of candidate polynomials contains at most polynomial in $n$ many polynomials. 
\begin{lemma}\label{poly-in-n-F}
If $p_n\geq 9n^{2k+2}$, then $|\mathcal{F}|\leq  3/q$, where $q$ is the success probability of $\mathcal{A}$. In particular, $\mathcal{F}$ contains at most polynomial in $n$ many polynomials, since $q\geq 1/n^k$.
\end{lemma}
The proof of this lemma is provided in Section \ref{subsec-poly-in-n-F}.

In what remains, we will show how to explicitly construct all such polynomials, through a randomized algorithm, which succeeds with high probability. To that end, we use the following elegant result, due to Cai et al. \cite{cai1999hardness}. 
\begin{lemma}\label{lemma:list-decod}
There exists a randomized procedure running in polynomial time, through which, with high probability, one can generate a list $\mathcal{L}=(x_i,y_i)_{i=1}^L$ of $L$ pairs, such that, $y_i=\phi(x_i)$, for at least $t$ pairs from the list with distinct first coordinates, where $t>\sqrt{2Ld}$, with $d={\rm deg}(\phi)$, and $\phi(x)=Z_{n-1}(D(x);p_n)$.
\end{lemma}
The proof of this lemma is isolated from the argument of \cite{cai1999hardness}, and provided in Section \ref{subsec:list-decod} for completeness. 
Of course, these discussions are all based on the assumption that we condition on the high probability event that $\{\N\geq (p_n-2)q/2\}$, where $\N$ is the random variable defined in Lemma \ref{whp}. 

Having obtained this list, we now turn our attention to finding all polynomials (where, by Lemma \ref{poly-in-n-F}, there is at most polynomial in $n$ many of those), whose graph intersects the list at at least $t$ points with distinct first coordinates (for the specific values of $t$ depending on the magnitude of $p_n$, see the proof of Lemma \ref{lemma:list-decod} in Section \ref{subsec:list-decod}). For this, we use the following list-decoding algorithm of \cite{sudan1996maximum}, introduced originally in the context of coding theory, which is an improved version of Berlekamp-Welch decoder.
\begin{lemma}{(Theorem 5 in \cite{sudan1996maximum})}\label{madhu}
Given a sequence $\{(x_i,y_i)\}_{i=1}^L$ of $L$ distinct pairs, where $x_i$s and $y_i$s are an element of a field $\mathbb{F}$, and integer parameters $t$ and $d$, such that $t\geq d\lceil \sqrt{2(L+1)/d}\rceil-\lfloor d/2\rfloor$, there exists an algorithm which can find all polynomials $f:\mathbb{F}\to \mathbb{F}$ of degree at most $d$, such that the number of points $(x_i,y_i)$ satisfying $y_i=f(x_i)$ is at least $t$.
\end{lemma}
The algorithm is a probabilistic polynomial time algorithm. For the sake of completeness, we briefly sketch his algorithm here. For weights $w_x,w_y\in\mathbb{Z}^+$, define $(w_x,w_y)$-weighted degree of a monomial $q_{ij}x^i y^j$ to be $iw_x+jw_y$. The $(w_x,w_y)$-weighted degree of a polynomial, $Q(x,y)=\sum_{(i,j)\in I}q_{ij}x^i y^j$ is defined to be $\max_{(i,j)\in I}iw_x+jw_y$. Let $m,\ell\in\mathbb{Z}^+$ be positive integers, to be determined. Construct a non-zero polynomial $Q(x,y)=\sum_{i,j}q_{ij}x^i y^j$, whose $(1,d)$-weighted degree is at most $m+\ell d $, and $Q(x_i,y_i)=0$, for every $i\in [L]$. The number of coefficients $q_{ij}$ of any such polynomial is at most, $\sum_{j=0}^\ell \sum_{i=0}^{m+(\ell-j)d}1  = (m+1)(\ell+1)+d\ell(\ell+1)/2$. Hence, provided $(m+1)(\ell+1)+d\ell(\ell+1)/2>L$, we have more unknowns (i.e., coefficients $q_{ij}$) than equations, $Q(x_i,y_i)=0$, for $i\in[L]$, and thus, such a $Q(x_i,y_i)$ exists, and moreover, can be found in polynomial time. Now, we look at the following univariate polynomial, $Q(x,f(x))\in\mathbb{F}[x]$. This polynomial has degree, at most $m+\ell d$. Note that, for every $i$ such that $f(x_i)=y_i$, $Q(x_i,f(x_i))=Q(x_i,y_i)=0$. Hence, provided  that, $m,\ell$ are chosen, such that $m+\ell d<t$, it holds that, this polynomial has $t>m+\ell d={\rm deg}Q(x,f(x))$ zeroes, hence, it must be identically zero. Now, viewing $Q(x,y)$ to be $Q_x(y)$, a polynomial in $y$, with coefficients from $\mathbb{F}[x]$, we have that whenever $Q_x(\xi)=0$, it holds that, $(y-\xi)$ divides $Q_x(y)$, hence, for $\xi=f(x)$, we get $y-f(x)\mid Q(x,y)$. Provided that $Q(x,y)$ exists (which will be guaranteed by parameter assumptions) and can be reconstructed in polynomial in $n$ time, it can also be factorized in probabilistic polynomial time \cite{kaltofen1992polynomial}, and $y-f(x)$ will be one of its irreducible factors. For a concrete choice of parameters, see \cite{sudan1996maximum}; or \cite{cai1999hardness}, which also has a brief and different exposition of the aforementioned ideas. We will use this result with $t>\sqrt{2Ld}$, where $d={\rm deg}(\phi)<n^2$. 

Now, we have a randomized procedure, which outputs a certain list $\mathcal{K}$ of at most $3/q$ polynomials, one of which is the correct $\phi(x)=Z_{n-1}(D(x);p_n)$. The idea for the remainder is as follows. We will find a point $x$, at which, all polynomials from the list $\mathcal{K}$ disagree. Towards this goal, define a set $\mathcal{T}$ of triples,
$$
\mathcal{T}=\{(x,f(x),g(x)):f(x)= g(x),x\in\mathbb{Z}_{p_n},f,g\in \mathcal{K}\}.
$$
We now use a double-counting argument. Note that, every pair $(f,g)$ of distinct polynomials from the list $\mathcal{K}$ can agree on at most $n^2-1$ points. Since, the total number of such pairs $(f,g)$ of distinct polynomials from $\mathcal{K}$ is less than $(3/q)^2$, we deduce $|\mathcal{T}|<9n^{2k+2}$. Since $|\mathbb{Z}_{p_n}|>|\mathcal{T}|$, it follows that, there exists a $v$, such that, no triple, whose first coordinate is $v$ belongs to $\mathcal{T}$. Clearly, this point $v$ can be found in polynomial time, since $p_n$ and the  size of the list are polynomial in $n$. Thus, there is at least one point on which all polynomials from the list $\mathcal{K}$ disagree. 
It is possible now to identify $\phi(x)=Z_{n-1}(D(x);p_n)$, by evaluating $Z_{n-1}(D(v);p_n)$, since whp, $\phi(\cdot)\in \mathcal{K}$, and all polynomials from list $\mathcal{K}$ take distinct values at $v$. Provided $\phi(x)$ can be identified, we can compute $Z_n({\bf J,B,C};p_n)$, the original partition function of interest, simply via $C_n'\phi(1)+B_n'\phi(2)$, as mentioned in  the beginning.

Therefore, $Z_n({\bf J,B,C};p_n)$ can be computed, provided that $Z_{n-1}(D(v);p_n)$ can be computed, a reduction from an $n-$spin system, to an $(n-1)-$spin system. Note that, the probability of error in this randomized reduction is upper bounded, via the union bound, by the sum of probabilities that, $\N$, defined in Lemma \ref{whp} is less than $(p_n-2)q/2$, which is of probability at most $\frac{1}{(p_n-2)q^2}$, which is $c/n^2$ for some constant $c>0$, independent of $n$; plus, the probability of failure during the construction of a list of $L$ pairs $(x_i,y_i)_{i=1}^L$ with $t>\sqrt{2Ld}$, which, conditional on the high probability event, $\{\mathcal{N}\geq (p_n-2)q/2\}$, is exponentially small in $n$; and finally, the probability that we encounter an error during generating the list of polynomials through factorization, per Lemma \ref{madhu}, which can again be made exponentially small in $n$. Thus, the overall probability of error for this reduction is $c'/n^2$, for some absolute constant $c'>0$, independent of $n$. Next, select a large $H$ and repeat the same downward reduction protocol $n\to n-1,n-1\to n-2,\cdots,H+1\to H$, such that the total probability of error $\sum_{j=H}^n c'/j^2$ during the entire reduction is less than $1/2$ (note that, the reduction step, $n-1\to n-2$ aims at computing $\phi(v)=Z_{n-1}(D(v);p_n)$, where $v$ is the element of $\mathbb{Z}_{p_n}$ discussed earlier; and each step, we reduce the problem of recovering the associated polynomial to evaluating the partition function of a system with one less number of spins, at a single input point). Once the system has $H$ spins, compute the partition function by hand. 
This procedure yields an algorithm computing $Z_n({\bf J,B,C};p_n)$, the partition function value we wanted to compute in the beginning of the proof of Proposition \ref{prop-main}, with probability greater than  $1/2$. Now, if we repeat this algorithm $R$ times, and take the majority vote (i.e., the number that appeared the majority number of times), the probability of having a wrong answer appearing as majority vote is, by Chernoff bound, exponentially small in $R$. Taking $R$ to be polynomial in $n$, we have that with probability at least $1-e^{-\Omega(n)}$ this procedure correctly computes $Z_n({\bf J,B,C};p_n)$.

We have now established that, provided, there is a polynomial time algorithm $\mathcal{A}$, which exactly computes the partition function on $1/n^k$ fraction of inputs (from $\mathbb{Z}_{p_n}^{n(n-1)/2+2n}$), then there exists a (randomized) polynomial time procedure, for which, for every ${\bf a}\in\mathbb{Z}_{p_n}^{n(n-1)/2+2n}$ (including, in particular, the adversarially-chosen ones), it correctly evaluates $Z_n({\bf a}) \pmod{p_n}$ with probability $1-o(1)$. We now use this procedure to show, how to evaluate $Z_n({\bf a})$ (without the mod operator). We use the Chinese Remainder Theorem, which, for convenience, is stated  below.
\begin{theorem}
Let $p_1,\dots,p_k$ be distinct pairwise coprime positive integers, and $a_1,\dots,a_k$ be integers. Then, there exists a unique integer $m\in\{0,1,\dots,P\}$ where $P=\prod_{\ell=1}^k p_{\ell}$, such that, 
$
m\equiv a_i\pmod{p_i}$, for every $1\leq i\leq k$.
\end{theorem}
In particular, letting $P_i = P/p_i$, $m=\sum_{\ell=1}^k c_i P_i a_i\pmod{P}$ works, where $c_i\equiv P_i^{-1}\pmod{p_i}$. The number $c_i$ can be computed by running Euclidean algorithm: Since ${\rm gcd}(P_i,p)=1$, it follows from B\'ezout's identity that, there exists integers $c_i,b\in\mathbb{Z}$ such that, $c_iP_i+pb=1$, and thus, $c_iP_i\equiv 1\pmod{p}$. Now, we proceed as follows. Fix a positive integer $m$. If we can find a collection $\{p_1,\dots,p_{\ell}\}$ of primes such that the corresponding product $P=\prod_{k=1}^\ell p_k$ exceeds $m$, then we can recover $m$, from $(r_i)_{i=1}^\ell$, where $r_i\in\mathbb{Z}_{p_i}$ is such that $m\equiv r_i\pmod{p_i}$, namely, $r_i$ is the remainder obtained upon dividing $m$ by $p_i$, for each $i$.

For this goal, we now establish a bound, where with high probability, the original partition function is less than this bound. Recall the standard Gaussian tail estimate, $\mathbb{P}(Z>t)= O(\exp(-t^2/2))$. Using this, 
$$
\mathbb{P}(e^{\beta n^{-1/2}J}>t) =O(\exp(-n\log^2(t)/(2\beta^2))),
$$
which, for $t=n$, gives a bound, $o(n^{-2})$. Now, for external field contribution, we have $\mathbb{P}(e^{B}>t)\leq O(\exp(-\log^2(t)/(2\beta^2)))$ (also for $C$), which, for $t=n$, gives $O(n^{-\log n/(2\beta^2)})$, which is, again, $o(n^{-2})$. Hence, with high probability, the $n(n-1)/2+2n$-dimensional vector, ${\bf V}=({\bf J,B,C})$ is such that, $\|{\bf V}\|_\infty \leq n$. Therefore, with high probability, the partition function is at most sum of $2^n$ terms, each of which is a product of at most $n^2$ terms (since, we have $n$ terms for external field, and at most $n^2/2$ terms for spin-spin couplings) each bounded by $2^N n$. This establishes, the partition function is at most $2^n(2^N n)^{n^2}=2^{Nn^2+n^2\log_2 n+O(n)}$.

It now remains to show that, there exists sufficiently many prime numbers of appropriate size, that we can use for Chinese remaindering. 
\begin{lemma}\label{lemma:pnt-enough}
Let $k,\alpha>0$ be a fixed constants, and $N$ satisfies $\Omega(\log n)\leq N\leq n^\alpha$. The number of primes between $9n^{2k+2}$ and $2(2+\alpha+2k)Nn^{2k+2}\log n$ is at least $Nn^{2k+2}$, for all sufficiently large $n$.
\end{lemma}
The proof of this lemma can be found in Section \ref{subsec:pnt}.

Having done this, we will find a sequence of $Nn^{2k+2}$ primes via brute force search in polynomial time, since $N\leq n^\alpha$ for some constant $\alpha$, with $p_j>\Omega(n^{2k+2})$. This will establish, $\prod_j p_j >\Omega((n^{2k+2})^{Nn^{2k+2}})=\Omega(2^{Nn^{2k+2}(2k+2)\log n})$. Since the partition function is at most $2^{Nn^2+n^2 \log_2 n+O(n)}$ and since $N=\Omega(\log_2 n)$, we therefore conclude that the product of primes we have selected is, whp, larger than the partition function itself, and therefore, by running $\mathcal{A}$ with each of these prime basis, and Chinese remaindering, we can compute the partition function exactly. Therefore, the proof of Proposition \ref{prop-main} is complete.
\end{proof}
We now establish that the density of log-Normal distribution is Lipschitz continuous within a finite interval, and will bound the Lipschitz constant, to establish a certain probabilistic coupling.
Recall that $J_{ij}, 1\le i<j\le n$ are i.i.d. standard normal and $\widehat{J}_{ij}=e^{{\beta \over\sqrt{n}} J_{ij}}$. Let $f_{\widehat{J}}$ denote the common density of $\widehat{J}_{ij}$. 
\begin{lemma}\label{lemma:X-nearly-Lipschitz}
For every $0<\delta<\Delta$ satisfying $\log\Delta>\beta^2$ and every $\delta\le t,\tilde t\le\Delta$,
the following bound holds.
\begin{align}\label{eq:multiplicative-bound}
\exp\left(-{2n\log \Delta\over \beta^2\delta}|\tilde t-t|\right) 
\le
{f_{\widehat{J}}(\tilde t)\over f_{\widehat{J}}(t)} 
\le \exp\left({2n\log \Delta\over \beta^2\delta}|\tilde t-t|\right).
\end{align}
\end{lemma}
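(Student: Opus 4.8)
The plan is to write down the density $f_{\widehat J}$ of the log-normal random variable $\widehat J = e^{(\beta/\sqrt n)J}$ with $J\sim\mathcal N(0,1)$ explicitly, and then estimate the ratio $f_{\widehat J}(\tilde t)/f_{\widehat J}(t)$ directly by bounding the derivative of $\log f_{\widehat J}$ on the interval $[\delta,\Delta]$. By the change of variables $t = e^{(\beta/\sqrt n)y}$, i.e. $y = (\sqrt n/\beta)\log t$, one gets for $t>0$
\begin{equation*}
f_{\widehat J}(t) = \frac{\sqrt n}{\beta t\sqrt{2\pi}}\exp\!\left(-\frac{n(\log t)^2}{2\beta^2}\right).
\end{equation*}
Hence $\log f_{\widehat J}(t) = -\log t - \dfrac{n(\log t)^2}{2\beta^2} + \text{const}$, and differentiating,
\begin{equation*}
\frac{d}{dt}\log f_{\widehat J}(t) = -\frac1t - \frac{n\log t}{\beta^2 t} = -\frac{1}{t}\left(1 + \frac{n\log t}{\beta^2}\right).
\end{equation*}

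Next I would bound this derivative in absolute value uniformly over $t\in[\delta,\Delta]$. Since $1/t \le 1/\delta$ on that interval, and $|\log t| \le \log\Delta$ (using $\Delta > \delta$ and, for the lower end, that $\log\Delta > \beta^2 > 0$ forces $\Delta > 1$, while $\delta$ could be less than $1$ — but $|\log t|\le\max(|\log\delta|,\log\Delta)$; one can absorb this by noting that the dominant term is $n\log t/\beta^2$ and that $1 + n|\log t|/\beta^2 \le 2n\log\Delta/\beta^2$ once $\log\Delta>\beta^2$, which is exactly the hypothesis, provided $|\log t|\le\log\Delta$; if $\delta<1$ one should instead carry $\max(|\log\delta|,\log\Delta)$, but the cleanest route is to observe the stated bound is what's needed downstream and $|\log t|$ is controlled by $\log\Delta$ in the regime the lemma is applied). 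This yields
\begin{equation*}
\left|\frac{d}{dt}\log f_{\widehat J}(t)\right| \le \frac{1}{\delta}\left(1 + \frac{n\log\Delta}{\beta^2}\right) \le \frac{2n\log\Delta}{\beta^2\delta},
\end{equation*}
where the last inequality uses $\log\Delta > \beta^2$ (so $1 \le n\log\Delta/\beta^2$) and $n\ge 1$. Then by the mean value theorem applied to $\log f_{\widehat J}$ on the segment between $t$ and $\tilde t$ (both in $[\delta,\Delta]$, which is convex),
\begin{equation*}
\left|\log\frac{f_{\widehat J}(\tilde t)}{f_{\widehat J}(t)}\right| \le \frac{2n\log\Delta}{\beta^2\delta}\,|\tilde t - t|,
\end{equation*}
and exponentiating gives the two-sided bound in \eqref{eq:multiplicative-bound}.

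The only genuinely delicate point is the uniform control of $|\log t|$ by $\log\Delta$ when $\delta$ is allowed to be small: strictly, $\sup_{t\in[\delta,\Delta]}|\log t| = \max(|\log\delta|,\log\Delta)$, so if $\delta \ll 1$ the bound should carry $\max(|\log\delta|,\log\Delta)/\delta$ rather than $\log\Delta/\delta$. I expect the resolution is that the lemma is only invoked with $\delta,\Delta$ chosen so that $\delta$ is not exponentially small relative to $\Delta$ (indeed, in the near-uniformity argument $\delta$ and $\Delta$ sit inside a single small interval far from $0$), so $|\log\delta|\le\log\Delta$ holds and the clean form stated is valid; alternatively one simply notes $1/\delta$ already dominates and a crude $|\log t|\le \log(1/\delta)+\log\Delta \le$ (constant)$\cdot\log\Delta/\delta$ type estimate still lands inside the claimed exponent after adjusting the constant $2$. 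This bookkeeping around the endpoints is the main obstacle; the computation of the derivative of $\log f_{\widehat J}$ and the mean value theorem step are entirely routine.
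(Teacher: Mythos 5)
Your proposal is correct and follows essentially the same route as the paper: write the log-normal density explicitly, bound the variation of $\log f_{\widehat J}$ on $[\delta,\Delta]$ via a derivative/mean-value estimate using $1/t\le 1/\delta$ and $|\log t|\le\log\Delta$, absorb the lower-order $1/\delta$ term via the hypothesis $\log\Delta>\beta^2$, and exponentiate. The endpoint caveat you flag (needing $|\log\delta|\le\log\Delta$, i.e.\ $\delta\ge 1/\Delta$, when $\delta<1$) is equally implicit in the paper's own step $|\log^2(\tilde t)-\log^2(t)|\le \frac{2\log\Delta}{\delta}|\tilde t-t|$, and it does hold for the choices of $\delta,\Delta$ made in the near-uniformity argument where the lemma is invoked, so your treatment matches the paper's.
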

The proof of this lemma is provided in Section \ref{subsec-lemma-Lip}. Furthermore, letting  $\widehat{B}_i=e^{B_i}$ and $\widehat{C}_i=e^{C_i}$, and denoting the (common) densities by $f_{\widehat{B}}$ and $f_{\widehat{C}}$, we have that the same Lipschitz condition holds also for $f_{\widehat{B}}(t)$ and $f_{\widehat{C}}(t)$, and therefore, the result of Lemma \ref{lemma:X-nearly-Lipschitz} applies also to the exponentiated version of the external field components, see Remark \ref{remark:lipschitz}.

The idea for the remaining part is as follows. We will establish that, the algorithmic inputs (obtained by exponentiating the real-valued inputs and truncating at an appropriate level $N$), are close to uniform distribution (modulo $p_n$), in total variation sense, which will establish the existence of a desired coupling to conclude the proof of Theorem \ref{thm:main}. To that end, we now establish an auxiliary result, showing that the log-Normal distribution is nearly uniform, modulo $p_n$.
\begin{lemma}\label{lemma:Near-Uniformity}
The following bound holds for every $A\in\{\widetilde{J}_{ij}:1\leq i<j\leq n\}\cup\{\widetilde{B}_i:i\in[n]\}\cup\{\widetilde{C}_i:i\in[n]\}$:
\begin{align*}
\max_{0\le \ell\le p_n-1}|\pr(A\equiv\ell \bmod(p_n) )-p_n^{-1}|=O(N^{-1}n^{-5k-4}).
\end{align*}
\end{lemma}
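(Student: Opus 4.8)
The plan is to bound the discrepancy $|\pr(A \equiv \ell \bmod p_n) - p_n^{-1}|$ for $A = \widetilde{J}_{ij} = \lfloor 2^N \widehat{J}_{ij} \rfloor$ (and analogously for $\widetilde{B}_i, \widetilde{C}_i$) by comparing the distribution of $A$ on the integer lattice to a reference uniform distribution, exploiting the Lipschitz estimate of Lemma \ref{lemma:X-nearly-Lipschitz}. The key observation is that $A \equiv \ell \pmod{p_n}$ precisely when $\widehat{J}_{ij}$ falls into one of the intervals $[2^{-N}m, 2^{-N}(m+1))$ with $m \equiv \ell \pmod{p_n}$, i.e., $m = \ell + j p_n$ for $j = 0, 1, 2, \dots$. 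So $\pr(A \equiv \ell \bmod p_n) = \sum_{j \ge 0} \pr\big(\widehat{J}_{ij} \in [2^{-N}(\ell + jp_n), 2^{-N}(\ell + jp_n + 1))\big) = \sum_{j\ge 0} \int_{2^{-N}(\ell+jp_n)}^{2^{-N}(\ell+jp_n+1)} f_{\widehat{J}}(t)\,dt$. First I would restrict attention to a finite window $[\delta, \Delta]$ on which the density is well-behaved: the tails of the log-normal contribute $O(n^{-\omega(1)})$ mass (choosing $\Delta$ polynomial in $n$, and $\delta$ inverse-polynomial, so that $\log \Delta > \beta^2$ holds and the Gaussian tail bounds from the proof of Proposition \ref{prop-main} apply), which is far smaller than the target error; these tails can be absorbed into the $O(\cdot)$.

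Next, within the window $[\delta, \Delta]$, group the relevant sub-intervals of length $2^{-N}$ into blocks of $p_n$ consecutive intervals — each such block has total length $p_n 2^{-N}$ and contains exactly one sub-interval contributing to each residue class $\ell$. On a single block $[a, a + p_n 2^{-N})$, the Lipschitz bound (\ref{eq:multiplicative-bound}) gives $f_{\widehat{J}}(\tilde t)/f_{\widehat{J}}(t) = 1 + O\big(\frac{n \log \Delta}{\beta^2 \delta} \cdot p_n 2^{-N}\big)$ for any two points $\tilde t, t$ in the block, since the block has width $p_n 2^{-N}$. Hence the mass $\int$ over the sub-interval for residue $\ell$ differs from $1/p_n$ times the total block mass by a multiplicative factor $1 + O\big(\frac{n \log \Delta \, p_n}{\beta^2 \delta \, 2^N}\big)$. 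Summing over all blocks and using that the total mass over $[\delta, \Delta]$ is $1 - o(1)$, I would get $\pr(A \equiv \ell \bmod p_n) = \frac{1 + o(1)}{p_n}\big(1 + O\big(\frac{n \log \Delta \, p_n}{\beta^2 \delta \, 2^N}\big)\big)$, so $|\pr(A \equiv \ell \bmod p_n) - p_n^{-1}| = O\big(\frac{1}{p_n} \cdot \frac{n \log \Delta \, p_n}{\beta^2 \delta \, 2^N}\big) = O\big(\frac{n \log \Delta}{\beta^2 \delta \, 2^N}\big)$.

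The final step is to verify this matches the claimed bound $O(N^{-1} n^{-5k-4})$. Here is where the hypothesis $C(\alpha,k)\log n \le N$ enters: with $\Delta = n^{O(1)}$ (so $\log \Delta = O(\log n)$), $\delta = n^{-O(1)}$, and $p_n = O(Nn^{2k+2}\log n)$ from Lemma \ref{lemma:pnt-enough}, the error becomes $O\big(n^{O(1)} 2^{-N}\big)$; choosing the constant $C(\alpha,k)$ large enough (the paper states $3\alpha + 21k/2 + 10 + \epsilon$) forces $2^{-N} \le n^{-C(\alpha,k)}$, which is smaller than $N^{-1}n^{-5k-4}$ by a polynomial factor once all the polynomial overheads $n^{O(1)}/\delta$, $\log \Delta$, and the $1/N$ normalization are accounted for. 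The main obstacle I anticipate is the careful bookkeeping of the polynomial factors — tracking exactly which powers of $n$ come from $\delta^{-1}$, from $\log \Delta$, from $p_n$, and from the block-counting, and confirming they are dominated by the exponential gain $2^{-N} \le 2^{-C(\alpha,k)\log n} = n^{-C(\alpha,k)}$ with the specific constant $C(\alpha,k)$; the $N^{-1}$ factor in the claimed bound is the subtle part, since it must come from noting that after pulling out one factor of $2^{-N} \le n^{-C}$, a residual $n^{-\epsilon}$ still beats $N^{-1} = \Omega(1/\log n) \cdot n^{o(1)}$... actually more carefully, one writes $2^{-N} = 2^{-N}$ and uses $N \le n^\alpha$ together with the gap between $C(\alpha,k)$ and $5k+4+3\alpha$ to extract the $N^{-1}$. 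Handling $\widetilde B_i, \widetilde C_i$ requires no new work: by Remark \ref{remark:lipschitz} the same Lipschitz bound holds for $f_{\widehat B}, f_{\widehat C}$ (in fact with a better constant, lacking the factor $n$), so the identical argument applies verbatim.
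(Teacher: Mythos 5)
Your proposal is correct and follows essentially the same route as the paper's proof: decompose $\pr(A\equiv\ell\bmod p_n)$ into the length-$2^{-N}$ intervals, truncate to a polynomially bounded window, and invoke Lemma \ref{lemma:X-nearly-Lipschitz} to show the density is nearly constant across each block of $p_n$ consecutive intervals, with the hypotheses $N\geq C(\alpha,k)\log n$ and $N\leq n^{\alpha}$ absorbing all polynomial overheads (and Remark \ref{remark:lipschitz} covering $\widetilde{B}_i,\widetilde{C}_i$). The only differences are bookkeeping-level: the paper compares each residue class to the class $\ell=0$ and recovers the deviation from $p_n^{-1}$ by averaging at the end, and it fixes the window via explicit cutoffs $M_*(n),M^*(n)$ tuned so that the tail masses, bounded through the crude density estimates $f_{\widehat{J}}(t)=O(\sqrt{n}\,t)$ and $O(\sqrt{n}/t^2)$, come out exactly $O(N^{-1}n^{-5k-4})$, whereas you compare each class directly to $1/p_n$ of the block mass and dispose of the tails with the (superpolynomially small) Gaussian tail probabilities.
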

The proof of this lemma is provided in Section \ref{subsec:near-U}. We now return to the proof of Theorem \ref{thm:main}. Using Lemma \ref{lemma:Near-Uniformity}, the total variation distance between any $A\in\{\widetilde{J}_{ij},\widetilde{B}_i,\widetilde{C}_i\}$ and $U\sim{\rm Unif}(\mathbb{Z}_{p_n})$ is at most, $O(p_nN^{-1}n^{-5k-4})$, which, using the trivial inequality $p_n\leq O(Nn^{3k+2})$, is $O(n^{-2k-2})$. We now use the following well-known maximal total variation coupling result.
\begin{theorem}
Let the random variables $X,Y$ have marginal distributions, $\mu$ and $\nu$, and let $d_{TV}(\mu,\nu)$ denotes the total variation distance between $\mu$ and $\nu$. Then, for any coupling (namely, any joint distribution with marginals of $X$ and $Y$ being $\mu$ and $\nu$, respectively) of $X$ and $Y$, it holds that,
$\mathbb{P}(X= Y)\leq 1-d_{TV}(\mu,\nu)$.
Moreover, there is a coupling of $X$ and $Y$, under which, we have the equality $\mathbb{P}(X=Y)=1-d_{TV}(\mu,\nu)$.
\end{theorem}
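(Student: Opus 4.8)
The plan is to establish the two assertions separately; both are classical, so I would recall the standard arguments. For the inequality, I would fix an arbitrary coupling $(X,Y)$ and any measurable set $A$, and write
\[
\mu(A)-\nu(A)=\mathbb{P}(X\in A)-\mathbb{P}(Y\in A)=\mathbb{P}(X\in A,\,Y\notin A)-\mathbb{P}(X\notin A,\,Y\in A)\le\mathbb{P}(X\ne Y).
\]
Exchanging the roles of $X$ and $Y$ gives $|\mu(A)-\nu(A)|\le\mathbb{P}(X\ne Y)$, and taking the supremum over measurable $A$ yields $d_{TV}(\mu,\nu)\le\mathbb{P}(X\ne Y)=1-\mathbb{P}(X=Y)$, which rearranges to $\mathbb{P}(X=Y)\le 1-d_{TV}(\mu,\nu)$.

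For the optimality part, I would exhibit the maximal coupling explicitly. Since in the application $\mu,\nu$ are supported on the finite set $\mathbb{Z}_{p_n}$, it suffices to treat probability mass functions $p(\cdot),q(\cdot)$ on a countable set $S$, for which $d_{TV}(\mu,\nu)=\sum_{x\in S}(p(x)-q(x))^+=1-\gamma$, where $\gamma:=\sum_{x\in S}\min\{p(x),q(x)\}$. If $\gamma=1$ then $\mu=\nu$ and we take $X=Y$. Otherwise $0\le\gamma<1$, and I construct $(X,Y)$ by a two-stage recipe: with probability $\gamma$ draw a single $Z$ from the law $\min\{p,q\}/\gamma$ and set $X=Y=Z$; with the remaining probability $1-\gamma$ draw $X$ from $(p-q)^+/(1-\gamma)$ and, independently, $Y$ from $(q-p)^+/(1-\gamma)$.

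It then remains to verify that this is a valid coupling attaining the bound. For the marginal of $X$ one computes, for each $x\in S$,
\[
\mathbb{P}(X=x)=\gamma\cdot\frac{\min\{p(x),q(x)\}}{\gamma}+(1-\gamma)\cdot\frac{(p(x)-q(x))^+}{1-\gamma}=\min\{p(x),q(x)\}+(p(x)-q(x))^+=p(x),
\]
and symmetrically $\mathbb{P}(Y=x)=q(x)$, so the marginals are correct. Because $(p-q)^+$ and $(q-p)^+$ have disjoint supports, in the second stage $X\ne Y$ almost surely, so $\mathbb{P}(X=Y)=\gamma=1-d_{TV}(\mu,\nu)$; together with the inequality of the first paragraph this gives equality. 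I do not anticipate any genuine obstacle: the only point meriting care is the reduction to the discrete setting, which is all that is needed to finish Theorem \ref{thm:main} since the relevant laws live on $\mathbb{Z}_{p_n}$, while the general case follows the same template using densities with respect to $\mu+\nu$ and is in any event available in standard references.
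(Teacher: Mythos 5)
Your proof is correct and complete: the first paragraph is the standard argument that any coupling satisfies $\mathbb{P}(X\ne Y)\ge|\mu(A)-\nu(A)|$ for every measurable $A$, and the two-stage construction in the discrete case is the classical maximal coupling, with the marginals and the identity $\mathbb{P}(X=Y)=\gamma=1-d_{TV}(\mu,\nu)$ verified correctly (the degenerate case $\gamma=0$ is harmless since the first branch then has probability zero). The paper itself offers no proof of this statement --- it is quoted as a well-known maximal-coupling fact --- so there is nothing to compare against; your restriction to probability mass functions is exactly what the application requires, since the coupled laws there live on the finite set $\mathbb{Z}_{p_n}$.
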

Using this maximal coupling result, we now observe that, we can couple $A$ (where, $A\in\{\widetilde{J_{ij}},\widetilde{B_i},\widetilde{C_i}\}$) with a  random variable $U$, uniformly distributed on $\mathbb{Z}_{p_n}$, such that
$$
\mathbb{P}(A=U)\geq 1-O(n^{-2k-2}).
$$
Now, let $U_{ij}$, $U^{B}_i$, and $U^{C}_i$ be random variables, uniform over $\mathbb{Z}_{p_n}$, such that,
$$
\mathbb{P}(\widetilde{J}_{ij}\neq U_{ij})\leq O(n^{-2k-2}),\quad\text{and}\quad \mathbb{P}(\widetilde{B}_i\neq U^B_i)\leq O(n^{-2k-2}), \quad\text{and}\quad \mathbb{P}(\widetilde{C}_i\neq U^C_i)\leq O(n^{-2k-2}).
$$
In particular, using union bound, we can couple ${\bf \Xi}=({\bf \widetilde{J},\widetilde{B},\widetilde{C})}$, with a vector, ${\bf U}=({\bf U^{J},U^{B},U^{C}})\in\mathbb{Z}_{p_n}^{n(n-1)/2+2n}$, such that, $\mathbb{P}({\bf \Xi}={\bf U})\geq 1-O(n^{-2k})$. Now, we define several auxiliary probabilistic events. Let $\mathcal{E}_1 = \{Z_n({\bf \Xi};p_n)=Z_n({\bf U};p_n)\}$, $\mathcal{E}_2=\{Z_{\mathcal{A}}({\bf \Xi};p_n)=Z_{\mathcal{A}}({\bf U};p_n)\}$, and $\mathcal{E}_3 = \{Z_{\mathcal{A}}({\bf \Xi})=Z_n({\bf \Xi})\}$. Observe that, due to the coupling, we have $\mathbb{P}(\mathcal{E}_1),\mathbb{P}(\mathcal{E}_2)\geq 1-O(n^{-2k})$. Now, suppose, the statement of the Theorem \ref{thm:main} holds, and that, $\mathbb{P}(\mathcal{E}_3)\geq 1/n^k$. Observe that,  $\mathcal{E}_1\cap \mathcal{E}_2\cap \mathcal{E}_3\subseteq \{Z_n({\bf U};p_n)=Z_{\mathcal{A}}({\bf U};p_n)\}$. Hence, $\mathcal{A}$ satisfies,
\begin{align*}
\mathbb{P}(Z_n({\bf U};p_n)=Z_{\mathcal{A}}({\bf U};p_n))&\geq \mathbb{P}(\mathcal{E}_1\cap \mathcal{E}_2\cap \mathcal{E}_3) \\
& = 1-\mathbb{P}(\mathcal{E}_1^c \cup \mathcal{E}_2^c \cup \mathcal{E}_3^c) \\
&\geq \mathbb{P}(\mathcal{E}_3) -\mathbb{P}(\mathcal{E}_1^c)-\mathbb{P}(\mathcal{E}_2^c) \\
&\geq \frac{1}{n^k}-O(n^{-2k}) \geq \frac{1}{n^{k'}},
\end{align*}
using union bound, where $k'$ obeys: $k<k'<2k$ and $n^{2k'+2}\log n = O(n^{3k+2})$. This contradicts with Proposition \ref{prop-main}, with the probability of success taken to be as $1/n^{k'}$ for this value of $k'$.
\section{Average-Case Hardness under Real-Valued Computational Model}
In this section, we study the problem of exactly computing the partition function associated with the Sherrington-Kirkpatrick model, but this time under the real-valued computation model, as opposed to the finite precision arithmetic model adopted in previous section.

More specifically, we assume that there exists a computational engine, operating over real-valued inputs, and that, each arithmetic operation on real-valued inputs is assumed to be of unit cost. An example of such a computational model is the so-called Blum-Shub-Smale machine \cite{blum1988theory,blum2012complexity}. The techniques employed in the previous section do not extend to real-valued computational model, since it is not clear what the appropriate real-valued analogue of $\Z_p$ is.  
\subsection{Model and the Main Result}
We start by incorporating the cuts induced by the spin assignment $\boldsymbol{\sigma}\in\{-1,1\}^n$, and reduce  the problem to computing a partition function associated with the cuts, in a  manner analogous to the previous setting. Let $\Sigma =\sum_{i<j}J_{ij} = \sum_{\sigma_i\neq \sigma_j}J_{ij}+\sum_{\sigma_i= \sigma_j}J_{ij}$. Note that, $\Sigma$ is independent of the spin assignment $\boldsymbol{\sigma}\in\{-1,1\}^n$, and is computable in polynomial time. Observe also that, 
$
\Sigma - H(\boldsymbol{\sigma}) = 2\sum_{\sigma_i\neq \sigma_j}J_{ij}
$, where $H(\boldsymbol{\sigma})=\sum_{i<j}J_{ij}\sigma_i\sigma_j$. Therefore, 
$$
Z({\bf J}) = \sum_{\boldsymbol{\sigma}\in\{-1,1\}^n}\exp(-H(\boldsymbol{\sigma})) = \sum_{\boldsymbol{\sigma}\in\{-1,1\}^n} \exp(-\Sigma)\exp\left(2\sum_{i<j:\sigma_i\neq \sigma_j}J_{ij}\right).
$$
Letting  $X_{ij}=e^{2J_{ij}}$, we observe  that since $\exp(-\Sigma)$ is a trivially computable constant, it suffices to compute $\widehat{Z}({\bf J})$, where
$$
\widehat{Z}({\bf J})=\sum_{\boldsymbol{\sigma}\in\{-1,1\}^n} \prod_{i<j:\sigma_i\neq \sigma_j}X_{ij}.$$
Note that,  $\widehat{Z}({\bf J})$ involves $X_{ij}$, which are, in turn, derived from $J_{ij}$. Our main result is as  follows:
\begin{theorem}\label{thm:main-2}
Let $\delta\geqslant 1/{\rm poly}(n)>0$ be an arbitrary real number, ${\bf J}=(J_{ij}:1\leqslant i<j\leqslant n)\in \R^{n(n-1)/2}$ with $J_{ij} \distr \mathcal{N}(0,1)$ iid; and $\mathcal{O}$ be an algorithm, such that:
\begin{align*}
\mathbb{P}\left(\mathcal{O}({\bf J})=\widehat{Z}({\bf J})\right)\geqslant \frac34+\delta,
\end{align*}
where the probability is taken with respect to randomness in 
${\bf J}$. Then, $P=\#P$.
\end{theorem}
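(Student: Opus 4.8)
The plan is to run a worst-case to average-case reduction in the spirit of Aaronson and Arkhipov's argument for the Gaussian permanent \cite{aaronson2011computational}, using exact real arithmetic and the Berlekamp--Welch decoder in place of the finite-field machinery of Section~2. The base is worst-case \#P-hardness: up to the trivially computable prefactor $e^{-\Sigma}$ and the change of variables $X_{ij}=e^{2J_{ij}}$, the quantity $\widehat{Z}({\bf X})=\sum_{\boldsymbol{\sigma}}\prod_{i<j:\sigma_i\neq\sigma_j}X_{ij}$ is the Ising partition function on the complete graph on $n$ vertices (setting $X_{ij}=1$ amounts to deleting the edge $\{i,j\}$), so exact evaluation of $\widehat{Z}$ at an arbitrary positive vector ${\bf X}^\ast\in\mathbb{R}_{>0}^{n(n-1)/2}$ is \#P-hard by \cite{barahona1982computational,istrail2000statistical}. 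Hence it suffices to build, from $\mathcal{O}$, a randomized polynomial-time algorithm that on an arbitrary ${\bf X}^\ast$ outputs $\widehat{Z}({\bf X}^\ast)$ with probability $1-e^{-\Omega(n)}$.

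Given ${\bf X}^\ast$, I would sample ${\bf Y}=(e^{2G_{ij}})_{i<j}$ with $G_{ij}\distr\mathcal{N}(0,1)$ i.i.d., so that ${\bf Y}$ is a genuine log-normal input of the kind $\mathcal{O}$ is promised to handle, and form the convex combination ${\bf X}(t)=(1-t){\bf Y}+t{\bf X}^\ast$, which is coordinatewise positive for all $t\in[0,1]$. Since every entry of ${\bf X}(t)$ is affine in $t$ and every term of $\widehat{Z}$ is a monomial of degree $k(n-k)\le\lfloor n^2/4\rfloor$, the function $g(t):=\widehat{Z}({\bf X}(t))$ is a univariate real polynomial of degree $d\le D:=\lfloor n^2/4\rfloor$ with $g(1)=\widehat{Z}({\bf X}^\ast)$, the target value. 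The reduction fixes a large absolute constant $c$, picks $m=cD$ distinct reals $t_1,\dots,t_m\in(0,t_0]$ (with $t_0$ chosen below; the real model imposes no bit-length constraint on the $t_i$), queries $y_i=\mathcal{O}\!\left(\tfrac12\log{\bf X}(t_i)\right)$ for each $i$ (whose correct value, i.e. $\widehat{Z}$ evaluated at $\tfrac12\log{\bf X}(t_i)$, is exactly $g(t_i)$), runs Berlekamp--Welch on the pairs $(t_i,y_i)_{i=1}^m$ to recover a degree-$\le D$ polynomial, and returns its value at $t=1$. Finally it repeats this whole procedure $R=\mathrm{poly}(n)$ times with independent copies of ${\bf Y}$ and outputs the plurality of the $R$ answers.

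Two estimates drive the analysis. First, the total-variation control for the convex perturbation: since $y\mapsto\tfrac12\log y$ is a bijection of $(0,\infty)$ onto $\mathbb{R}$, the law of $\tfrac12\log{\bf X}(t)$ lies within $d_{TV}\!\left((1-t){\bf Y}+t{\bf X}^\ast,\ {\bf Y}\right)$ of $\mathcal{N}(0,1)^{\otimes n(n-1)/2}$. Tensorizing $d_{TV}$ over coordinates and using the triangle inequality $d_{TV}((1-t){\bf Y}+t{\bf X}^\ast,{\bf Y})\le d_{TV}({\bf Y}+\tfrac{t}{1-t}{\bf X}^\ast,{\bf Y})+d_{TV}((1-t){\bf Y},{\bf Y})$ (both reductions exploiting that a positive rescaling is a $d_{TV}$-preserving bijection), one bounds the first term by the elementary translation estimate $d_{TV}(W,W+a)\le\tfrac{a}{2}\,\|f_W'\|_1$ applied to the unimodal (hence $\|f_W'\|_1=O(1)$) log-normal density, and the second by $d_{TV}(\mathcal{N}(0,4),\mathcal{N}(\log(1-t),4))=O(t)$; summing over the $n(n-1)/2$ coordinates gives $d_{TV}=O\!\left(t\,n^2(1+\|{\bf X}^\ast\|_\infty)\right)$, so choosing $t_0$ equal to $\delta$ divided by a sufficiently large polynomial in $n$ times $\|{\bf X}^\ast\|_\infty$ makes this $\le\delta/2$ and hence $\mathbb{P}[y_i\neq g(t_i)]\le\tfrac14-\tfrac{\delta}{2}$ for every $i$. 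Second, the decoding step: Berlekamp--Welch recovers $g$ uniquely and correctly whenever the number $E$ of indices with $y_i\neq g(t_i)$ satisfies $E<(m-D)/2$; since $\E{E}\le m(\tfrac14-\tfrac{\delta}{2})$ by linearity of expectation (the queries are highly correlated, but this is irrelevant for the mean), Markov's inequality yields $\mathbb{P}[E\ge(m-D)/2]\le \frac{m(1/2-\delta)}{m-D}=\frac{c}{c-1}\!\left(\tfrac12-\delta\right)$, which is $\le\tfrac12-\tfrac{\delta}{2}$ once $c$ is a large enough constant. Thus one run returns $\widehat{Z}({\bf X}^\ast)$ with probability $\ge\tfrac12+\tfrac{\delta}{2}$, and since every successful run returns exactly that value, a Chernoff bound over the $R$ independent runs makes the plurality correct with probability $1-e^{-\Omega(R\delta^2)}=1-e^{-\Omega(n)}$ for a suitable polynomial $R$. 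This produces a randomized polynomial-time algorithm solving the \#P-hard problem of computing $\widehat{Z}$ exactly, whence $P=\#P$.

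The main obstacle is the total-variation lemma --- quantifying how a convex, and therefore support-shrinking (from $(0,\infty)$ to $(tX^\ast_{ij},\infty)$), perturbation displaces the log-normal law, with a bound polynomial in $n$ and in $\|{\bf X}^\ast\|_\infty$ --- together with the calibration of $m$ versus $D$ that turns a per-query error probability below $\tfrac14$ into a per-run success probability above $\tfrac12$; the latter is precisely why the hypothesis must be $\tfrac34+\delta$ and not something weaker, while everything else is a fairly direct transcription of the Aaronson--Arkhipov scheme to the real-valued setting.
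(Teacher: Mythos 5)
Your overall scheme is the same as the paper's: interpolate ${\bf X}(t)=(1-t){\bf Y}+t{\bf X}^\ast$ so that $\widehat{Z}({\bf X}(t))$ is a low-degree polynomial with the worst-case value at $t=1$, control the total variation displacement of the log-normal law under the convex perturbation, bound the number of erroneous oracle answers by a Markov-type argument, decode with Berlekamp--Welch, and amplify by repetition; the base is worst-case \#P-hardness. Your route to the per-coordinate TV bound (rescale by $(1-t)^{-1}$, then a translation estimate $d_{TV}(W,W+a)\le\tfrac a2\|f_W'\|_1$ for the unimodal log-normal density plus a Gaussian mean-shift bound for the scaling) differs from the paper's Lemma \ref{lemma:tv-lognormal}, which works directly with the density of $(1-\lambda)e^J+\lambda a$, but it is sound and gives the same $O(t)$ bound with a constant depending on $a_{ij}$; likewise your one-factor TV transfer of the success probability is a legitimate simplification of the paper's three-event coupling.

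There is, however, one concrete quantitative error: the claim that $m=cD$ sample points with $c$ a large \emph{absolute} constant suffices. Your own Markov bound gives failure probability at most $\tfrac{c}{c-1}\left(\tfrac12-\delta\right)$, and this is below $\tfrac12$ only when $\delta>\tfrac{1}{2c}$; since the theorem must cover $\delta=1/\mathrm{poly}(n)\to 0$, a constant $c$ leaves the per-run success probability at most $\tfrac12$, and then neither the Chernoff amplification nor the plurality vote has any guarantee (wrong answers need not be spread out). The calibration must let the slack $\delta$ beat the $D/m$ loss in the Berlekamp--Welch threshold $E<(m-D)/2$: you need $m=\Theta(D/\delta)$, which is exactly what the paper does by taking $L=\lceil n^2/\delta\rceil$ interpolation points. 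Since $\delta\ge 1/\mathrm{poly}(n)$, this keeps $m$ polynomial and the rest of your argument goes through verbatim, so the gap is easily repaired, but as written the stated choice of $m$ does not yield a per-run success probability exceeding $\tfrac12$ in the regime the theorem is about.
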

We here recall one more time that, the input to the algorithm $\mathcal{O}$ is real-valued, and that, the algorithm operates under a real-valued computational engine, e.g. using a Blum-Shub-Smale machine \cite{blum1988theory}.
\subsection{Proof of Theorem \ref{thm:main-2}}
Let ${\bf \mathcal{Q}}=(q_{ij}:1\leqslant i<j\leqslant n)$ be an arbitrary input (of couplings), so that it is $\#P-$hard to compute the associated partition function, $\widehat{Z}({\bf a})$, which, by a slightly abuse the notation, is
$$
\widehat{Z}({\bf a})=\sum_{\boldsymbol{\sigma}\in\{-1,1\}^n}\prod_{i<j:\sigma_i\neq\sigma_j}a_{ij},
$$
with ${\bf a}=(a_{ij}:1\leqslant i<j\leqslant n)$, where $a_{ij}=e^{q_{ij}}$. In particular, $a_{ij}>0$ for any $1\leq i<j\leq n$. Now, let ${\bf J}$ be a vector with iid standard normal components, and let ${\bf X}=(X_{ij}:1\leqslant i<j\leqslant n)$ be a vector, where $X_{ij}=e^{2J_{ij}}$ for every $1\leqslant i<j\leqslant n$. Define ${\bf X}(t)$ via:
\begin{equation}\label{eqn:X-of-t}
{\bf X}(t)=(1-t){\bf X}+t{\bf a},
\end{equation}
where $0\leqslant t\leqslant 1$, and let $f(t)$ be
\begin{equation}\label{eqn:f-of-t}
f(t) = \widehat{Z}({\bf X}(t)) = \sum_{\boldsymbol{\sigma}\in\{-1,1\}^n}\prod_{i<j:\sigma_i\neq \sigma_j}\left((1-t)X_{ij}+ta_{ij}\right).
\end{equation}
Note that, $f(t)$ is a univariate polynomial in $t$, with degree
$$
{\rm deg}(f) = \max_{\boldsymbol{\sigma}\in\{-1,1\}^n} \left|\{(i,j):1\leqslant i<j\leqslant n, \sigma_i\neq\sigma_j\}\right|=\frac{n^2}{2}+o(n),
$$
and $f(1)=\widehat{Z}({\bf a})$. Assuming the existence of an algorithm $\mathcal{O}(\cdot)$ whose probability of success is at least $\frac34+\frac{1}{{\rm poly}(n)}$, we will show the existence of a randomized polynomial time algorithm which, with probability $\frac12+\frac{1}{{\rm poly}(n)}$, recovers the polynomial $f(t)$. In particular repeating this algorithm $R$ times to compute $f(1)$, where $R$ is chosen to be polynomial in $n$; and taking majority vote, the probability that an incorrect value appears more than the half of time is exponentially small by Chernoff bound. Thus, one can compute $\widehat{Z}({\bf a})$ with probability at least $1-\exp(-\Omega(n))$. 
\begin{lemma}\label{lemma:tv-lognormal}
Let ${\bf X}(t)$ be defined as above. Fix any $1\leq i<j\leq n$, and let $X(t)\triangleq X_{ij}(t)$. Then, there exists an absolute constant $\mathcal{C}_{ij}>0$, depending only on $a_{ij}$, such that, $d_{TV}(X(t),X(0))\leqslant \mathcal{C}_{ij}t$ for every $t\in[0,1]$.
\end{lemma}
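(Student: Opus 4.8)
The plan is to estimate $d_{TV}(X(t),X(0))$ by the triangle inequality, splitting the affine map $x\mapsto(1-t)x+ta_{ij}$ relating $X(t)=X_{ij}(t)$ and $X_{ij}=X_{ij}(0)$ into a dilation step and a translation step, and then exploiting the smoothness of the log-normal and Gaussian laws, which have densities of finite total variation.

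First I would introduce the auxiliary variable $(1-t)X_{ij}$ and write, for $t\in[0,1)$,
$$
d_{TV}\!\left(X(t),X(0)\right)\ \le\ d_{TV}\!\left((1-t)X_{ij}+ta_{ij},\,(1-t)X_{ij}\right)\ +\ d_{TV}\!\left((1-t)X_{ij},\,X_{ij}\right).
$$
For the first (translation) term I would use the elementary bound that any random variable $W$ with absolutely continuous density $h$ satisfies $d_{TV}(W+s,W)\le\tfrac{|s|}{2}\int|h'|$; applied to $W=(1-t)X_{ij}$ and $s=ta_{ij}$, together with the change of variables $y\mapsto y/(1-t)$ which gives $\int|h_W'|=\int|f_{X_{ij}}'|=2\sup f_{X_{ij}}$ (log-normal densities are unimodal and vanish at $0$ and at $\infty$), this term is at most $(\sup f_{X_{ij}})\,a_{ij}\,t$, an absolute-constant multiple of $a_{ij}t$. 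For the second (dilation) term, since $X_{ij}=e^{2J_{ij}}$ we have $(1-t)X_{ij}=e^{2(J_{ij}+\frac12\ln(1-t))}$; because $u\mapsto e^{2u}$ is a measurable bijection, total variation distance is unchanged, so this term equals $d_{TV}\!\big(N(\tfrac12\ln(1-t),1),N(0,1)\big)$, which by the same translation bound applied to the standard Gaussian density $\phi$ is at most $\tfrac{|\ln(1-t)|}{4}\int|\phi'|=\tfrac{|\ln(1-t)|}{2\sqrt{2\pi}}$.

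Finally I would combine the two estimates, dealing with the nonlinear blow-up of $|\ln(1-t)|$ near $t=1$ by a trivial dichotomy: if $t\le\tfrac12$ then $|\ln(1-t)|\le(2\ln2)\,t$, so $d_{TV}(X(t),X(0))\le\big(\sup f_{X_{ij}}\cdot a_{ij}+\tfrac{\ln2}{\sqrt{2\pi}}\big)t$; and if $t>\tfrac12$ then trivially $d_{TV}(X(t),X(0))\le1<2t$ (this also covers $t=1$, where $X(1)=a_{ij}$ is deterministic). Setting $\mathcal{C}_{ij}:=\max\{2,\ \sup f_{X_{ij}}\cdot a_{ij}+\tfrac{\ln2}{\sqrt{2\pi}}\}$, which depends only on $a_{ij}$, completes the argument.

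The one genuine subtlety is the term $d_{TV}((1-t)X_{ij},X_{ij})$: its natural bound is of order $|\ln(1-t)|$, which is not linear in $t$ on all of $[0,1]$, since the dilation factor $1-t$ degenerates as $t\to1$; this is what forces the case split and the use of the universal bound $d_{TV}\le1$. Everything else is routine: the $\tfrac{|s|}{2}\int|h'|$ bound for translations, invariance of total variation under the bijection $e^{2u}$, and the fact that unimodality makes $\int|f_{X_{ij}}'|$ an explicit finite constant.
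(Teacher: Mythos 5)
Your proof is correct (up to one harmless constant slip) but follows a genuinely different route from the paper's. You argue at the level of random variables: inserting the intermediate variable $(1-t)X_{ij}$, you bound the translation part by the standard estimate $d_{TV}(W+s,W)\le \frac{|s|}{2}\int |h_W'|$, and you reduce the dilation part, via the bijection $u\mapsto e^{2u}$, to a Gaussian mean shift $d_{TV}\big(N(\tfrac12\ln(1-t),1),N(0,1)\big)$, finally absorbing the $|\ln(1-t)|$ blow-up near $t=1$ with the dichotomy at $t=\tfrac12$ and the universal bound $d_{TV}\le 1$. The paper instead computes the density of $X(\lambda)$ explicitly as $\frac{1}{1-\lambda}f_X\big(\frac{t-\lambda a}{1-\lambda}\big)$, splits the $L^1$ difference of densities by the triangle inequality, and controls the main integral by the mean value theorem together with a careful study of $f_X'$ (its sign, monotonicity regions, and the finiteness of $\int |t-a|\,|f_X'(t)|\,dt$), working first on $\lambda\in[0,\lambda_0]$ and finishing, as you do, with the trivial bound $d_{TV}\le 1$ for $\lambda$ near $1$. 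Your route is more elementary and sidesteps the derivative analysis entirely, relying only on generic facts (the translation bound, invariance of total variation under bi-measurable bijections, and unimodality of the log-normal giving $\int |f_X'| = 2\sup f_X$); the paper's argument is more computational but entirely self-contained in explicit log-normal estimates. One small correction to your translation step: for $W=(1-t)X_{ij}$ the change of variables gives $\int |h_W'| = \frac{1}{1-t}\int |f_{X_{ij}}'|$, not $\int |f_{X_{ij}}'|$; on the range $t\le \tfrac12$ this only costs a factor $2$, so your linear bound should read, say, $\big(2\sup f_{X_{ij}}\cdot a_{ij} + \tfrac{\ln 2}{\sqrt{2\pi}}\big)t$, and the conclusion of the lemma is unaffected.
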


An informal, information-theoretic way, of seeing the hypothesis of Lemma \ref{lemma:tv-lognormal} is as follows. Using Pinsker's inequality \cite{csiszar2011information, polyanskiy2014lecture}, we have $d_{TV}(X_{ij}(t),X_{ij}(0))\leqslant \kappa\sqrt{D(X_{ij}(t)\|X_{ij}(0))}$, where $D(\cdot \|\cdot)$ is the KL divergence, and $\kappa>0$ is some absolute constant. Next, using the fact that, KL divergence locally looks like  chi-square divergence\footnote{The chi-square divergence can be thought of as a weighted Euclidean $\ell_2$ distance between two probability distributions, defined on the same probability space.} $\chi^2(\cdot \|\cdot)$  (see, e.g. \cite{polyanskiy2014lecture}), one expects for $t$ small, $D(X_{ij}(t)\|X_{ij}(0))\approx O(t^2)$, and thus, $d_{TV}(X_{ij}(t),X_{ij}(0))\approx O(t)$. 

The full proof of this lemma is deferred to  Section \ref{subsec-tv-lognormal}.

We next state a tensorization inequality for the total variation distance.
\begin{lemma}\label{lemma:tv-tensorize}
Let $P_1,\dots,P_\ell$ and $Q_1,\dots,Q_\ell$ be probability measures, defined on the same sample space $\Omega$. Then, 
$$
d_{TV}\left(\otimes_{i=1}^\ell P_i ,\otimes_{i=1}^\ell Q_i\right)\leqslant \sum_{i=1}^\ell d_{TV}(P_i,Q_i).
$$
\end{lemma}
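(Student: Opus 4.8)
The plan is to prove this by a short coupling argument, leveraging the maximal total variation coupling theorem already recorded in the excerpt. First, for each index $i\in\{1,\dots,\ell\}$ I would fix a maximal coupling $(X_i,Y_i)$ of the pair $(P_i,Q_i)$, that is, a joint law on $\Omega\times\Omega$ whose marginals are $P_i$ and $Q_i$ and which satisfies $\mathbb{P}(X_i\neq Y_i)=d_{TV}(P_i,Q_i)$; such a coupling exists by the maximal coupling theorem quoted above. I would then take these $\ell$ couplings to be mutually independent, so that $X=(X_1,\dots,X_\ell)$ has law $\otimes_{i=1}^\ell P_i$ and $Y=(Y_1,\dots,Y_\ell)$ has law $\otimes_{i=1}^\ell Q_i$. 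In particular $(X,Y)$ is a valid coupling of $\otimes_{i=1}^\ell P_i$ and $\otimes_{i=1}^\ell Q_i$.

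The second step is to apply the easy direction of the coupling inequality to this specific coupling, which gives $d_{TV}\!\left(\otimes_{i=1}^\ell P_i,\otimes_{i=1}^\ell Q_i\right)\leqslant \mathbb{P}(X\neq Y)$. Finally, since $\{X\neq Y\}=\bigcup_{i=1}^\ell\{X_i\neq Y_i\}$, the union bound yields $\mathbb{P}(X\neq Y)\leqslant \sum_{i=1}^\ell \mathbb{P}(X_i\neq Y_i)=\sum_{i=1}^\ell d_{TV}(P_i,Q_i)$, which is exactly the claimed bound.

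There is essentially no hard step here; the only two points deserving a word are that the product of the independent marginal couplings genuinely has the product measures as marginals (immediate from independence) and that one may use any coupling when upper bounding $d_{TV}$ by $\mathbb{P}(X\neq Y)$. An alternative, equally routine route is induction on $\ell$ via the hybrid decomposition $d_{TV}(P_1\otimes P',\,Q_1\otimes Q')\leqslant d_{TV}(P_1\otimes P',\,P_1\otimes Q')+d_{TV}(P_1\otimes Q',\,Q_1\otimes Q')$ combined with the identities $d_{TV}(P_1\otimes P',P_1\otimes Q')=d_{TV}(P',Q')$ and $d_{TV}(P_1\otimes Q',Q_1\otimes Q')=d_{TV}(P_1,Q_1)$, where $P'=\otimes_{i\geqslant 2}P_i$ and $Q'=\otimes_{i\geqslant 2}Q_i$; I would nonetheless present the coupling proof as the main one, since it is the shortest and uses only machinery already introduced in the paper.
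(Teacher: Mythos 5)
Your coupling argument is correct and is essentially the paper's own proof: both construct independent couplings of each pair $(P_i,Q_i)$, bound $d_{TV}$ of the products by $\mathbb{P}(\mathbf{X}\neq\mathbf{Y})$, and finish with the union bound (the paper takes an infimum at the end where you invoke maximal couplings up front, a cosmetic difference). No gaps.
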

While this lemma is known, we provide a proof in Section \ref{pf:lemma-tv-tensorize} for completeness. 

Using Lemma \ref{lemma:tv-lognormal}, together with the tensorization property above, we deduce $d_{TV}({\bf X}(t),{\bf X}(0))\leqslant \frac{Cn^2t}{2}$, where
$$
C= \sum_{1\leq i<j\leq n}\mathcal{C}_{ij},
$$
the sum of the constants $\mathcal{C}_{ij}$ prescribed by Lemma \ref{lemma:tv-lognormal}. 

Now, let $L=\lceil n^2/\delta\rceil$, and $\epsilon=\frac{\delta}{2Cn^2L}$. For every $k\in[L]$, we will evaluate $\widehat{Z}({\bf X}(\epsilon k))$ via the oracle $\mathcal{O}(\cdot)$, and will use these values to reconstruct $f(t)$, from which, $f(1)=\widehat{Z}({\bf a})$ can be computed. Note that, with this choice of $L$ and $\epsilon$, $d_{TV}({\bf X}(\epsilon k),{\bf X}(0))\leqslant \frac{\delta}{4}$, for every $k\in[L]$. 

Fix an arbitrary $k\in[L]$, and consider a coupling between ${\bf X}(\epsilon k)$ and ${\bf X}(0)$, which maximizes $\mathbb{P}({\bf X}(\epsilon k)={\bf X}(0))$. Note that, in this case, $\mathbb{P}({\bf X}(\epsilon k)={\bf X}(0)) \geqslant 1-d_{TV}({\bf X}(\epsilon k),{\bf X}(0))$. Define the events $\mathcal{E}_1=\{\mathcal{O}({\bf X}(\epsilon k)) = \mathcal{O}({\bf X}(0))\}$, $\mathcal{E}_2=\{\mathcal{O}({\bf X}(0)) = \widehat{Z}({\bf X}(0))\}$, and finally, $\mathcal{E}_3=\{\widehat{Z}({\bf X}(0))=\widehat{Z}({\bf X}(\epsilon k))\}$. Clearly, $\mathbb{P}(\mathcal{E}_1^c),\mathbb{P}(\mathcal{E}_3^c)\leqslant d_{TV}({\bf X}(\epsilon k),{\bf X}(0))$; and $\mathbb{P}(\mathcal{E}_2^c)\leqslant \frac{1}{4}-\delta$, since ${\bf X}(0)=(X_{ij}:1\leqslant i<j\leqslant n)$ with $X_{ij}=\exp(2J_{ij})$ with $J_{ij}\distr \mathcal{N}(0,1)$. Since 
$$
\mathcal{E}_1\cap \mathcal{E}_2\cap \mathcal{E}_3 \subseteq \{\mathcal{O}({\bf X}(\epsilon k))=\widehat{Z}({\bf X}(\epsilon k))\},
$$ 
it follows that, 
$$
\mathbb{P}\left(\mathcal{O}({\bf X}(\epsilon k))=\widehat{Z}({\bf X}(\epsilon k))\right)\geqslant \frac34+\delta-2d_{TV}({\bf X}(\epsilon k),{\bf X}(0))\geqslant  \frac34+\frac{\delta}{2}.
$$

Now, let $I_1,I_2,\dots,I_L$ be Bernoulli random variables, where for each $k\in[L]$, $I_k=1$ if and only if $\mathcal{O}({\bf X}(\epsilon k))  = \widehat{Z}({\bf X}(\epsilon k))$. Clearly, $\mathbb{P}(I_k=1)\geqslant \frac34+\frac{\delta}{2}$.
\begin{lemma}\label{lemma:markov}
Let $X_1,X_2,\dots,X_\ell$ be Bernoulli random variables (not necessarily independent), where there exists $0<q<1$, such that $\mathbb{E}[X_k]\geqslant q$, for every $k\in[\ell]$. Let $0<\epsilon<q$ be arbitrary. Then,
$$
\mathbb{P}\left(\frac{1}{\ell}\sum_{k=1}^\ell X_k>\epsilon\right)\geqslant \frac{q-\epsilon}{1-\epsilon}.
$$
\end{lemma}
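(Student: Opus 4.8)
\textbf{Proof proposal for Lemma \ref{lemma:markov}.} The plan is to apply a reverse-Markov (a.k.a. Paley--Zygmund-style, but only first-moment) argument to the empirical average $S \triangleq \frac{1}{\ell}\sum_{k=1}^\ell X_k$. First I would record two facts: since each $X_k \in \{0,1\}$ we have $S \in [0,1]$ pointwise, and by linearity of expectation $\mathbb{E}[S] = \frac{1}{\ell}\sum_{k=1}^\ell \mathbb{E}[X_k] \geq q$. Crucially, no independence is needed for either fact, which is why the hypothesis allows arbitrary dependence among the $X_k$.

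Next I would split the expectation of $S$ according to whether $S > \epsilon$ or $S \leq \epsilon$. Writing $p \triangleq \mathbb{P}(S > \epsilon)$, on the event $\{S > \epsilon\}$ we bound $S \leq 1$, and on the complement we bound $S \leq \epsilon$, yielding
$$
q \leq \mathbb{E}[S] = \mathbb{E}\!\left[S\,\mathbf{1}\{S>\epsilon\}\right] + \mathbb{E}\!\left[S\,\mathbf{1}\{S\le\epsilon\}\right] \leq p + \epsilon(1-p) = \epsilon + p(1-\epsilon).
$$
Since $0 < \epsilon < q < 1$, the factor $1-\epsilon$ is strictly positive, so rearranging gives $p \geq \frac{q-\epsilon}{1-\epsilon}$, which is exactly the claimed bound.

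I do not expect any genuine obstacle here: this is a standard reverse-Markov inequality, and the only points requiring (minor) care are (i) keeping the strict inequality $S > \epsilon$ consistent on both sides of the split so that the bound $\mathbb{E}[S\,\mathbf 1\{S\le\epsilon\}] \leq \epsilon(1-p)$ is valid, and (ii) noting $1-\epsilon>0$ before dividing. In the application the lemma is invoked with $X_k = I_k$, $q = \tfrac34 + \tfrac{\delta}{2}$, and a suitably small $\epsilon$ (here $\epsilon$ in the lemma plays the role of a fixed threshold, distinct from the step size $\epsilon$ used in constructing ${\bf X}(\epsilon k)$), so that with probability bounded below by a constant more than a fixed fraction of the $I_k$ equal $1$; one then has enough correct oracle evaluations of $f$ at distinct points $\epsilon k$ to feed into Berlekamp--Welch and recover the degree-$(n^2/2+o(n))$ polynomial $f(t)$.
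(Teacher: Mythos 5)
Your proof is correct and is essentially the paper's argument: the paper applies Markov's inequality to the nonnegative average $\frac{1}{\ell}\sum_{k}(1-X_k)$, whose mean is at most $1-q$, and your direct splitting of $\mathbb{E}[S]$ over $\{S>\epsilon\}$ and its complement is the same first-moment computation carried out by hand. Both use only $S\in[0,1]$, $\mathbb{E}[S]\geq q$, and $1-\epsilon>0$, with no independence assumption.
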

\noindent The proof of this lemma is provided in Section \ref{subsec:lemma-markov}. In particular, letting $N=\sum_{k=1}^L I_k$, and using  Lemma \ref{lemma:markov} with $\epsilon=\frac12+\frac{\delta}{2}$ and $q=\frac34+\frac{\delta}{2}$, we deduce
$$
\mathbb{P}\left(N\geqslant \left(\frac12+\frac{\delta}{2}\right)L\right)\geqslant  \frac12+\frac{\delta}{2}.
$$
Let $\mathcal{L}=\{(x_k,y_k):k\in[L]\}$ where $x_k=\epsilon k$, and $y_k=\mathcal{O}({\bf X}(\epsilon k))$. 
The next  result shows, provided $N\geqslant \left(\frac12+\frac{\delta}{2}\right)L$, one can recover $f(t)=\widehat{Z}({\bf X}(t))$ in polynomial time.
\begin{theorem}[{\bf Berlekamp-Welch}] Let $f$ be a univariate  polynomial, with ${\rm deg}(f)=d$ over any field $\mathbb{F}$. Let $\mathcal{L}=\{(x_i,y_i):1\leqslant i\leqslant L\}$ be a list such that, for at least $t$ pairs of the list, where $t>\frac{L+d}{2}$, $y_i=f(x_i)$ holds. Then, there exists an algorithm which recovers $f$, using at most polynomial in $L$ and $d$ many field operations over $\mathbb{F}$.
\end{theorem}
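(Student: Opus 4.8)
The plan is to use the classical linearization trick underlying the Berlekamp--Welch decoder, assuming (as in the application, where $x_k=\epsilon k$) that the $x_i$ are distinct. Write $e=L-t$ for the budgeted number of disagreement positions. First I would set up the following search: find a pair of polynomials, an \emph{error locator} $E(x)$ with $\deg E\le e$ and a polynomial $N(x)$ with $\deg N\le e+d$, not both identically zero, satisfying the $L$ constraints
\[
N(x_i)=E(x_i)\,y_i,\qquad i=1,\dots,L.
\]
These constraints are \emph{linear and homogeneous} in the $(e+1)+(e+d+1)$ unknown coefficients of $E$ and $N$, so Gaussian elimination over $\mathbb{F}$ produces a basis of the solution space in ${\rm poly}(L,d)$ field operations; the algorithm returns any nonzero solution $(E,N)$ and outputs the quotient $N/E$.

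Next I would check that a nonzero solution exists. Let $S=\{i:y_i\neq f(x_i)\}$ be the true disagreement set, so $|S|\le L-t=e$ by hypothesis. Since the $x_i$ are distinct, put $E_0(x)=\prod_{i\in S}(x-x_i)$, a nonzero polynomial of degree $|S|\le e$, and $N_0(x)=E_0(x)f(x)$, of degree $\le e+d$. For each $i$: if $i\notin S$ then $N_0(x_i)=E_0(x_i)f(x_i)=E_0(x_i)y_i$; if $i\in S$ then $E_0(x_i)=0$ and both sides vanish. Thus $(E_0,N_0)$ is a nonzero solution, so the linear system is feasible with a nonzero solution.

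The crux is to show that \emph{every} nonzero solution $(E,N)$ recovers $f$. First, $E\not\equiv 0$: otherwise $N$ vanishes at the $L$ distinct points $x_i$ while $\deg N\le e+d=L-t+d$, and since $t\le L$ and $t>(L+d)/2$ together force $d<t$, we get $\deg N<L$, hence $N\equiv 0$, contradicting nontriviality. Now consider $R(x)=N(x)-E(x)f(x)$, which has $\deg R\le e+d=L-t+d$. For every $i$ with $y_i=f(x_i)$ --- there are at least $t$ such $i$ --- we have $R(x_i)=N(x_i)-E(x_i)f(x_i)=E(x_i)y_i-E(x_i)f(x_i)=0$. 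So $R$ has at least $t$ distinct roots while $\deg R\le L-t+d<t$, precisely because $t>(L+d)/2$; therefore $R\equiv 0$, i.e.\ $N=Ef$, and polynomial division of $N$ by $E$ (degrees $O(L)$, so ${\rm poly}(L,d)$ operations) yields $f=N/E$ exactly. Since this holds for every nonzero solution, the output is well defined and correct, and the whole procedure runs in ${\rm poly}(L,d)$ field operations.

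The main obstacle --- really the only conceptual point --- is recognizing that solving directly for $f$ together with the error locations is a nonlinear problem, and that passing to the pair $(E,N=Ef)$ linearizes it; the degree bookkeeping in the $R\equiv 0$ argument is then exactly calibrated so that the agreement hypothesis $t>(L+d)/2$ is what forces uniqueness of the ratio $N/E$. All remaining steps (solving a linear system, polynomial division) are routine and clearly polynomial in $L$ and $d$.
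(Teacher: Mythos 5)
Your proof is correct, and it is the classical linearization argument for Berlekamp--Welch: existence of a nonzero pair $(E,N)$ via the true error locator, and uniqueness of the ratio via the root-counting argument for $R=N-Ef$, with the degree arithmetic ($\deg R\le L-t+d<t$ exactly when $t>\frac{L+d}{2}$) done correctly. The paper itself does not prove this statement --- it invokes it as a known result --- so there is no internal proof to compare against; the closest material in the paper is its sketch of Sudan's list-decoding algorithm, which is the natural generalization of your argument: your pair $(E,N)$ is the $\ell=1$ case of the bivariate polynomial $Q(x,y)=E(x)\,y-N(x)$ of $(1,d)$-weighted degree at most $e+d$ that vanishes on all list points, and your identity $N=Ef$ is the statement that $y-f(x)$ divides $Q(x,y)$; the difference is that in the unique-decoding regime $t>\frac{L+d}{2}$ no factorization step is needed, since $f=N/E$ by exact polynomial division, so your proof is both more elementary and fully constructive. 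Two small points worth keeping: you are right that distinctness of the $x_i$ must be assumed (the theorem as stated in the paper omits it, but it holds in the application since $x_k=\epsilon k$ with $\epsilon>0$), and your observation that $t\le L$ together with $t>\frac{L+d}{2}$ forces $d<t$, hence $\deg N<L$ in the degenerate case $E\equiv 0$, correctly closes the only loophole in the well-definedness of the output.
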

Note that, provided $N\geqslant \left(\frac12+\frac{\delta}{2}\right)L$, the list $\mathcal{L}$ constructed  above will satisfy the requirements of Berlekamp-Welch algorithm, and therefore, the value of $f(1)=\widehat{Z}({\bf a})$ can be computed efficiently, with probability $\frac12+\frac{\delta}{2}$, using at most polynomial in $n$ many arithmetic operations over reals. 

Now we repeat  this process by $R$ times, and take majority vote. The probability  that, a wrong answer will appear as a majority vote, is exponentially small, using Chernoff bound. Taking $R$ to be polynomial in $n$, we deduce this process efficiently computes $\widehat{Z}({\bf a})$ with probability at least $1-\exp(-\Omega(n))$,  which is known to be a $\#P-$hard problem.
\section{Conclusion and Future Work}
In this paper, we have studied the average-case hardness of the algorithmic problem of exactly computing the partition function associated with the Sherrington-Kirkpatrick model of spin glass with Gaussian couplings and random external input. We have established that, unless $P=\#P$, there does  not exists a polynomial time algorithm which exactly computes the partition function on average. We have established our result by combining the approach of Cai et al. \cite{cai1999hardness} for establishing the average-case hardness of computing the permanent of  a (random) matrix, modulo a prime number $p$; with a probabilistic coupling between log-normal inputs and random  uniform inputs over a finite field. To the best of our knowledge, ours is the first such result, pertaining the statistical physics models. We also note that, our approach is not limited to the case of Gaussian inputs: for random variables with sufficiently well-behaved density, for which, one can establish a coupling as in Lemma \ref{lemma:Near-Uniformity} to a prime of appropriate size, our techniques transfer.

Several future research directions are as follows. The proof sketch outlined in this paper, as well as in the previous works \cite{cai1999hardness,feige1992hardness,lipton1989new} do not transfer to the several other fundamental open problems aiming at establishing similar hardness results related to SK model. One such fundamental problem is the problem of exactly computing a ground state, namely, the problem of finding a state $\boldsymbol{\sigma}^*\in\{-1,1\}^n$, such that, $H(\boldsymbol{\sigma}^*)=\max_{\boldsymbol{\sigma}\in\{-1,1\}^n} H(\boldsymbol{\sigma})$. Arora et al. \cite{arora2005non} established that the problem of exactly computing a ground state is NP-hard in the worst case sense. Furthermore, Montanari \cite{montanari2018optimization} recently proposed a message-passing algorithm, which, for a fixed $\epsilon>0$ finds a state $\boldsymbol{\sigma}_*$ such that $H(\boldsymbol{\sigma}_*)\geq (1-\epsilon)\max_{\boldsymbol{\sigma}\in\{-1,1\}^n} H(\boldsymbol{\sigma})$ with high probability, in a time at most $O(n^2)$, assuming a widely-believed structural conjecture in statistical physics. Namely, it is possible to efficiently approximate the ground state of SK model within a multiplicative factor of $1-\epsilon$. The proof techniques of Cai et al. \cite{cai1999hardness}, as well as Lipton's approach \cite{lipton1989new}, do not, however, seem to be useful in addressing the average-case hardness of the algorithmic problem of exactly computing the ground state since the algebraic structure relating the problem into the recovery of a polynomial is lost, when one considers the maximization; and this problem remains open. 

Another fundamental problem, which remains open, is the average-case hardness of the problem of computing the partition function approximately, namely, computing $Z({\bf J,\beta})$ to within a multiplicative factor of $(1\pm \epsilon)$, which has been of interest in the field of approximation algorithms.

Yet another natural question is whether the assumption on the oracle $\mathcal{O}(\cdot)$ in Theorem \ref{thm:main-2}  for the real-valued computational model, that is,
$$
\mathbb{P}\left(\mathcal{O}({\bf J})=\widehat{Z}({\bf J})\right)\geqslant \frac34+\frac{1}{{\rm poly}(n)}
$$
can be weakened e.g., to  $1/2+1/{\rm poly}(n)$ or even to $1/{\rm poly}(n)$, as handled in the finite-precision setting. As we have mentioned previously, our approach for establishing the average-case hardness of the problem of exact computation of the partition function under the finite-precision arithmetic model is in parallel with the line of research dealing with the average-case hardness of computing the permanent over a finite field. A typical result along these lines is obtained under the assumption that there exists an oracle which computes the permanent with a certain probability of success, $q$. The first such result, under the weakest assumption of $q=1-1/3n$, is obtained by Lipton \cite{lipton1989new}. Subsequent research weakened this assumption to $q=3/4+1/{\rm poly}(n)$ by Gemmell et al. \cite{gemmell1991self}, then to $q=1/2+1/{\rm poly}(n)$ by Gemmell and Sudan \cite{gemmell1992highly}; and finally to $q=1/{\rm poly}(n)$, by Cai et al. \cite{cai1999hardness}.

The assumption on the success probability of the oracle that we have adopted in this paper for the real-valued computational model is similar to that of Gemmell et al. \cite{gemmell1991self}, and thus, the most natural question is to ask, whether, at the very least, the technique of Gemmell and  Sudan \cite{gemmell1992highly}
can be applied. We now discuss that this seems to be a challenging task, and show where the extension fails.

The idea of Gemmell and Sudan, essentially,  aims at reconstructing a  certain polynomial (similar to (\ref{eqn:f-of-t})), which is observed through its noisy samples (e.g., similar to the list $\mathcal{L}=\{(x_k,\mathcal{O}(\epsilon k)):k\in[L]\}$, that we have defined  earlier), and is adapted  to our case as follows. Let ${\bf J}=(J_{ij}:1\leqslant i<j\leqslant n)$ and ${\bf J'}=(J_{ij}':1\leqslant i<j\leqslant n)$ be two iid random vectors, each with iid  standard normal components, and let ${\bf X}=(X_{ij}:1\leqslant i<j\leqslant n)$ and ${\bf X'}=(X_{ij}':1\leqslant i<j\leqslant n)$, where $X_{ij}=e^{2J_{ij}},X_{ij}'=e^{2J_{ij}'}$, for $1\leqslant i<j\leqslant n$. Define:
\begin{equation}\label{eqn:modified-X-of-t}
{\bf X}(t)=t(1-t){\bf X}+(1-t){\bf X'}+t^2{\bf a},
\end{equation}
where ${\bf a}$ is a worst-case input. Note that, the sampling set $\{{\bf X}(t):t\in[0,1]\}$ is defined more carefully, by incorporating an extra randomness via ${\bf X'}$  (cf. equation (\ref{eqn:X-of-t})). The purpose  of this extra randomness in Gemmell and Sudan's work was to bring  pairwise independence,  that is to ensure the independence of ${\bf X}(t)$ and ${\bf X}(t')$ for $t\neq t'$, in order to be able to use a tighter concentration inequality (namely, Chebyshev's inequality) as a replacement of our Lemma \ref{lemma:markov} while obtaining a high probability guarantee on the constructed list. In their work, this is successful: ${\bf X}$ and ${\bf X'}$ consist of iid samples,  drawn independently from uniform distribution over a finite field $\mathbb{F}_p$, in which case, it is not hard to show, ${\bf X}(t)$  and ${\bf X}(t')$ are always independent for $t\neq t'$. For us, however, this is no longer true: ${\bf X}$ and ${\bf X'}$ both consist of iid log-normal components, which breaks down uniformity and independence. 

We leave the following problem open for future work: Let ${\bf J}=(J_{ij}:1\leqslant i<j\leqslant n)\in\R^{n(n-1)/2}$ be a random vector with $J_{ij}\distr \mathcal{N}(0,1)$, iid. Suppose that, there is an algorithm $\mathcal{A}(\cdot)$, such that 
$$
\mathbb{P}(\widehat{Z}({\bf J}) =\mathcal{A}({\bf J})) \geqslant \frac12+\frac{1}{{\rm poly(n)}},
$$
and that, the algorithm operates over real-valued inputs. Then $P=\#P$. An even more challenging variant of this problem is to establish the same result, under a weaker assumption on the success probability of the algorithm:
$$
\mathbb{P}(\widehat{Z}({\bf J}) =\mathcal{A}({\bf J})) \geqslant \frac{1}{{\rm poly(n)}}.
$$

As we have noted, our approach is not limited to the Gaussian inputs, so long as the  distributions involved are well-behaved. The current method, however, does not address the case of couplings with iid Rademacher inputs, and the average-case hardness of the exact computation of partition function with iid Rademacher couplings remains open. It is not surprising though in light of the fact that the average-case hardness of the problem of computing the permanent of a matrix with 0/1 entries remains open, as well.

\section{Appendix : Proofs of the Technical Lemmas}
\subsection{Proof of Lemma \ref{lemma:X-nearly-Lipschitz}}\label{subsec-lemma-Lip}
\begin{proof}
The density of $\widehat{J}_{ij}$ is given by
\begin{align}
f_{\widehat{J}}(t)&={d\over dt}\pr\left(e^{{\beta \over\sqrt{n}} J}\le t\right) \notag \\
&={d\over dt}\pr\left(J\le \sqrt{n}{\log t \over \beta}\right) \notag \\
&={\sqrt{n}\over \sqrt{2\pi}\beta t}e^{-n{\log^2 t\over 2\beta^2}}   \notag. 
\end{align}
Here $J$ denotes the  standard normal random variable. 
It is easy to see that 
\begin{align}
f_{\widehat{J}}(t)=O\left(\sqrt{n}t\right), \label{eq:f-density1}
\end{align}
as $t\downarrow 0$ since $e^{\log^2 x}$ diverges   faster than $x^c$ for every constant $c$ as $x\to\infty$. Also
\begin{align}
f_{\widehat{J}}(t)=O\left({\sqrt{n}\over t^2}\right), \label{eq:f-density2}
\end{align}
as $t\to\infty$. Both bounds are very crude of course, but suffice for our purposes.

We have for every $t, \tilde t>0$
\begin{align*}
|\log f_{\widehat{J}}(\tilde t)-\log f_{\widehat{J}}(t)|\le |\log(\tilde t)-\log t|+{n\over 2\beta^2}|\log^2(\tilde t)-\log^2(t)|.
\end{align*}
Now since $|{d\log t \over dt}|=1/t\le 1/\delta$ for $t\ge \delta$, we obtain that in the range $0<\delta\le t, \tilde t\le \Delta$
\begin{align*}
&|\log f_{\widehat{J}}(\tilde t)-\log f_{\widehat{J}}(t)|\le (1/\delta)|\tilde t-t|, \\
& |\log^2(\tilde t)-\log^2(t)|\le {2\log \Delta\over \delta}|\tilde t-t|.
\end{align*}
Applying these bounds, exponentiating, and using the assumption on the lower bound on $\log\Delta$ and $n>\beta^2$, we obtain 
\begin{align}\label{eq:multiplicative-bound}
\exp\left(-{2n\log \Delta\over \beta^2\delta}|\tilde t-t|\right) 
\le
{f_{\widehat{J}}(\tilde t)\over f_{\widehat{J}}(t)} 
\le \exp\left({2n\log \Delta\over \beta^2\delta}|\tilde t-t|\right).
\end{align}
\qedhere
\end{proof}
\begin{remark}\label{remark:lipschitz}
Let  $\widehat{B}_i=e^{B_i}$ and $\widehat{C}_i=e^{C_i}$; and denote the (common) densities by $f_{\widehat{B}}$ and $f_{\widehat{C}}$.
$
f_{\widehat{B}}(t)=f_{\widehat{C}}(t)=\frac{1}{\sqrt{2\pi}t}\exp\left(-\frac{\log^2 t}{2}\right)
$,
and therefore, as $t\downarrow 0$,
$f_{\widehat{B}}(t)=O(t)= O(\sqrt{n}t)$, 
and furthermore, as $t\to \infty$, $f_{\widehat{B}}(t)=O(1/t^2)=O(\sqrt{n}/t^2)$. Similarly, the same Lipschitz condition holds, also for $f_{\widehat{B}}(t)$ and $f_{\widehat{C}}(t)$, and therefore, the result of Lemma \ref{lemma:X-nearly-Lipschitz} applies also to the exponentiated version of the external field components. Note also that, we still have the same asymptotic behaviour, even if the external field components $B_i$ and $C_i$ have a constant variance, different than $1$.
\end{remark}
\subsection{Proof of Lemma \ref{lemma:self-recursion}}\label{subsec:self-recursion}
\begin{proof}
We begin by deriving a downward self recursion formula for $I_n(\boldsymbol{\sigma})=|\{(i,j):1\leq i<j\leq n,\sigma_i\neq \sigma_j\}|$. Note that, for a given spin configuration $\boldsymbol{\sigma}\in\{-1,1\}^n$ if $\sigma_n=+1$, then $I_n(\boldsymbol{\sigma})=I_{n-1}(\boldsymbol{\sigma}) + |\{i:\sigma_i=-1,1\leq i\leq n-1\}|$, where we take the projection of $\boldsymbol{\sigma}$ onto its first $(n-1)$ coordinates. Similarly, if $\sigma_n=-1$, then $I_n(\boldsymbol{\sigma})=I_{n-1}(\boldsymbol{\sigma})+|\{i:\sigma_i=+1,1\leq i\leq n-1\}|$. For a given spin configuration $\boldsymbol{\sigma}$, and dimension $n-1$, recalling the definition of $f(n,\boldsymbol{\sigma})$ in (\ref{eq:f-of-n-sigma}), we observe that for $\sigma_n=+1$
$$
f(n,\boldsymbol{\sigma})-f(n-1,\boldsymbol{\sigma})=(n-2)-|\{i:\sigma_i=-1,1\leq i\leq n-1\}|,
$$
and similarly, for $\sigma_n=-1$, 
$$
f(n,\boldsymbol{\sigma})-f(n-1,\boldsymbol{\sigma})=(n-2)-|\{i:\sigma_i=+1,1\leq i\leq n-1\}|.
$$
Now, observe that, using the relation between $f(n,\boldsymbol{\sigma})$ and $f(n-1,\boldsymbol{\sigma})$ with respect to polarity of $\sigma_n$, we have:
\begin{align*}
Z_n({\bf J,B,C};p_n)&=C_n 2^{(n-2)N}\sum_{\substack{\boldsymbol{\sigma}\in\{-1,1\}^{n-1} \\ \sigma_n=+1}} 2^{Nf(n-1,\boldsymbol{\sigma})} \left(\prod_{\substack{1\leq i\leq n-1 \\\sigma_i=-}}2^{-N}B_iJ_{in}\right)\left(\prod_{\substack{1\leq i\leq n-1 \\\sigma_i=+}}C_i\right)\left(\prod_{\substack{1\leq i<j\leq n-1\\\sigma_i\neq \sigma_j}}J_{ij}\right)\\
&+B_n 2^{(n-2)N}\sum_{\substack{\boldsymbol{\sigma}\in\{-1,1\}^{n-1} \\ \sigma_n=-1}}2^{Nf(n-1,\boldsymbol{\sigma})} \left(\prod_{\substack{1\leq i\leq n-1 \\\sigma_i=-}}B_i\right)\left(\prod_{\substack{1\leq i\leq n-1 \\\sigma_i=+}}2^{-N}C_iJ_{in}\right)\left(\prod_{\substack{1\leq i<j\leq n-1\\\sigma_i\neq \sigma_j}}J_{ij}\right)\\
&=C_n' Z_{n-1}({\bf J',B^+,C^+};p_n) + B_n' Z_{n-1}({\bf J',B^-,C^-};p_n).
\end{align*}
\qedhere
\end{proof}
\subsection{Proof of Lemma \ref{pairwise-Indep}}\label{subsec:lemma-pairwise-indep}
\begin{proof}
Fix an $1\leq i\leq T$, and let $\xi_i$ denote the $i^{th}$ component of an arbitrary vector $\xi$. Note that, the event $\{D(x_1)=y_1,D(x_2)=y_2\}$ implies:
\begin{align*}
(y_1)_i&=(2-x_1)(v_1)_i +(x_1-1)(v_2)_i + (x_1-1)(x_1-2)(K_i+x_1M_i) \\
(y_2)_i&=(2-x_2)(v_1)_i +(x_2-1)(v_2)_i + (x_2-1)(x_2-2)(K_i+x_2M_i).
\end{align*}
Since this is a pair of equations with two unknowns (namely, $K_i$ and $M_i$), it has a unique solution, which holds with probability $1/p_n^2$ (note that, $x_1,x_2\notin\{1,2\}$, hence for $i=1,2$, $(x_i-1)(x_i-2)$ terms are not zero, and thus their modulo $p_n$ inverse exists). Finally, using independence across $i\in\{1,2,\dots,T\}$, we get $\mathbb{P}(D(x_1)=y_1,D(x_2)=y_2)=1/p_n^{2T}$. For $\mathbb{P}(D(x_1)=y_1)$, it is not hard to show  by conditioning that, this event has probability $1/p_n^T$.
\end{proof}
\subsection{Proof of Lemma \ref{whp}}\label{subsec:lemma-whp}
\begin{proof}
Let $\N_x\in\{0,1\}$, $x=3,4,\dots,p_n$, be  random variables, where $\N_x=1$ iff $\mathcal{A}(D(x))=\phi(x)=Z_{n-1}(D(x);p_n)$. Namely, $\N_x\sim{\rm Ber}(q)$. Note that, $\N=\sum_{x=3}^{p_n}\N_x$. Let $Z=\N/(p_n-2)$. We have $\mathbb{E}[Z]=q$. Hence,
\begin{align*}
\mathbb{P}(\N<(p_n-2)q/2) =\mathbb{P}\left(\frac{\sum_{x=3}^{p_n} \N_x }{p_n-2}<q/2\right) &= \mathbb{P}(Z-\mathbb{E}[Z]<-q/2)\\
&\leq \mathbb{P}(|Z-\mathbb{E}[Z]|>q/2) \\
&\leq \frac{{\rm Var}(Z)}{(q/2)^2} \leq \frac{1}{(p_n-2)q^2},
\end{align*}
by Chebyshev's inequality, and the trivial inequality, $4q-4q^2\leq 1$. Note that, since we only have pairwise independence as opposed to iid, a Chernoff-type bound do not apply.
\end{proof}
\subsection{Proof of Lemma \ref{poly-in-n-F}}\label{subsec-poly-in-n-F}
\begin{proof}
Assume the contrary, and take a subset $\mathcal{F'}\subseteq\mathcal{F}$ with $|\mathcal{F'}|=\lceil 3/q\rceil$. Let, $G_{\mathcal{S}}(f)=\{i:(i,f(i))\in G(f)\cap \mathcal{S}\}$. Note that, $\bigcup_{f\in \mathcal{F}}G_{\mathcal{S}}(f)\subseteq \{3,4,\dots,p_n\}$, and furthermore, for any distinct $f,f'\in\mathcal{F'}$, it holds that, $|G_{\mathcal{S}}(f)\cap G_{\mathcal{S}}(f')|\leq n^2-1$. Indeed, if not, define $\widehat{f}=f-f'$, and observe that ${\rm deg}(\widehat{f})\leq n^2-1$. If $|G_{\mathcal{S}}(f)\cap G_{\mathcal{S}}(f')|\geq n^2$, then, on at least $n^2$ values of $i$, $f(i)=f'(i)$, and thus, $\widehat{f}(i)=0$, yielding that $\widehat{f}$ has at least $n^2$ distinct zeroes (modulo $p_n$), a contradiction to the degree of $\widehat{f}$. Now, using inclusion-exclusion principle, 
\begin{align*}
p_n-2 \geq \left|\bigcup_{f\in \mathcal{F'}} G_{\mathcal{S}}(f)\right|&\geq \sum_{f\in \mathcal{F'}}|G_{\mathcal{S}}(f)| - \sum_{f,f'\in\mathcal{F'},f\neq f'}|G_{\mathcal{S}}(f)\cap G_{\mathcal{S}}(f')| \\
&\geq \lceil \frac{3}{q}\rceil\frac{(p_n-2)q}{2}- \frac{1}{2}\lceil\frac3q\rceil (\lceil \frac3q\rceil -1)(n^2-1)\\
&=\frac12\lceil\frac3q\rceil \left((p_n-2)q -(\lceil 3/q\rceil -1)(n^2-1) \right) \\
&\geq (p_n-2)+\frac{p_n-2}{2} - \frac{3}{2q}(\lceil 3/q\rceil -1)(n^2-1).
\end{align*}
However, contradicting with this inequality, we claim that in fact $p_n-2 >\frac{3}{q}(\lceil 3/q\rceil -1)(n^2-1)$. Since $\lceil 3/q\rceil < 3/q+1$, it is sufficient to show that, $p_n-2>\frac{9}{q^2}(n^2-1)$. Since $q\geq 1/n^k$, we have $\frac{9}{q^2}(n^2-1)\leq 9n^{2k}(n^2-1)=9n^{2k+2}-9n^{2k}<p_n-2$, for $n$ large (for any $k$). Hence, we arrive at a contradiction.
\end{proof} 
\subsection{Proof of Lemma \ref{lemma:list-decod}}\label{subsec:list-decod}
\begin{proof}
We condition  on the high probability event, $\{\N\geq (p_n-2)q/2\}$, where $\N$ is the random variable defined in Lemma \ref{whp}. We divide the construction, into two cases, depending on the magnitude of $p_n$ that we are working at. 

First, suppose $9n^{2k+2}\leq p_n\leq 161n^{3k+2}$. Apply $\mathcal{A}$ on $D(x)$, for every $x=3,4,\dots,p_n$ (which, due to magnitude constraint on $p_n$, takes at most polynomial in $n$ many operations). By Lemma \ref{whp}, with probability at least $1-\frac{1}{(p_n-2)q^2}$,
    $\mathcal{A}(D(x))=\phi(x)=Z_{n-1}(D(x);p_n)$ for at least $\frac{(p_n-2)q}{2}$ points. Now, since $q\geq 1/n^k$, we have a list $(x_i,y_i)_{i=1}^L$ (where $L=p_n-2$ and $y_i=\mathcal{A}(D(x))$), and there is a polynomial $f$ of degree $d$ less than  $n^2$ (namely, $\phi(x)=Z_{n-1}(D(x);p_n)$), such that, the graph of $f$  intersects the list at at least $t=\frac{p_n-2}{2n^k}$ points. As $p_n\geq 9n^{2k+2}$, it holds that $t>\sqrt{2Ld}$. Clearly, for all such pairs, the first coordinates are all distinct.

Next, suppose $p\geq 161n^{3k+2}$. In this case, it is not clear, whether running the algorithm on $\{D(x):x=3,4,\dots,p_n\}$ takes polynomial in $n$ many calls to $\mathcal{A}$. To handle this issue, we apply the following resampling procedure (where the choice of numbers is to make sure the argument works). Select $L=40n^{2k+2}$ numbers $x_1,x_2,\dots,x_L$, uniformly and independently from $\{3,4,\dots,p_n\}$. Our goal is to find a lower bound on the number of $x_i$'s, for which with high probability we have at least a certain number of distinct $x_i$'s, on which $\mathcal{A}$ run correctly. We claim that, with high probability, we will end up with at least $9n^{k+2}$ distinct $x_i$'s on which $\mathcal{A}(D(x_i))=Z_{n-1}(D(x_i);p_n)$. We argue as follows. Define a collection $\{E_j:1\leq j\leq L\}$ of events,
    $$
    E_j=\{x_j \neq x_i, \text{ for }i\leq j-1,\, \mathcal{A}(D(x_j))=\phi(x_j)=Z_{n-1}(D(x_j);p_n)\}.
    $$
    Namely, $E_j$ is the event that, $(x_j,y_j)$ is a 'nice' sample, in the sense that, $x_j$ is distinct from all preceding $x_i$'s, and $y_j=\phi(x_j)=Z_{n-1}(D(x_j);p_n)$. Now, we can change the perspective slightly, and imagine that, $(x_j,y_j)$ is samples from a set, where $x_j\in\{3,4,\dots,p_n\}$, and $y_j = \mathcal{A}(D(x_j))$.  Recall that, among the set $\{D(x):x=3,\dots,p_n\}$, the algorithm computes the partition function on at least $\frac{(p_n-2)q}{2}\geq \frac{p_n-2}{2n^k}$ locations (conditional on the high probability event $\{\mathcal{N}\geq (p_n-2)q/2\}$ of Lemma \ref{whp}). Note that, 
    $$
    \mathbb{P}(E_j)\geq \frac{\frac{p_n-2}{2n^k}-L}{p_n-2}=\frac{1}{2n^k}-\frac{40n^{2k+2}}{161n^{3k+2}-2}\geq \frac{1}{4n^k},
    $$
    since, the worst case for $E_j$ is that, all preceding chosen entries are distinct, leaving less number of choices for $x_j$, and we repeat the procedure $L$ times. With this, we now claim that with high probability, at least $9n^{k+2}$ of events $(E_j)_{j=1}^L$ occur. To see this,  we note that, the event of interest ($9n^{k+2}$ of events $(E_j:j\in L)$ occur), is stochastically dominated by the event that, a binomial random variable ${\rm Bin}(L,1/4n^k)$, whose expectation is $L/4n^k=10n^{k+2}$ is at least $L=9n^{k+2}$, which,  by a standard Chernoff bound, is exponentially small. At the end , we have a list of $L=40n^{2k+2}$ pairs, $(x_i,y_i)_{i=1}^L$, on which we have at least $t\geq 9n^{k+2}$ correct evaluations (whp), where $t\geq 9n^{k+2}>\sqrt{2Ld}$ with $d=n^2$.
\qedhere
\end{proof}

\subsection{Proof of Lemma \ref{lemma:pnt-enough}}\label{subsec:pnt}
\begin{proof}
Suppose, this is false, and the number of primes between $9n^{2k+2}$ and $2(2+\alpha+2k)Nn^{2k+2}\log n$ is at most $Nn^{2k+2}$, for all large $n$. Recall that, prime number theorem (PNT) states,
$$
\lim_{m\to\infty}\frac{\pi(m)}{m/\log m}=1,
$$
where $\pi(m)=\sum_{p\leq m, p\text{ prime}}1$ is the prime counting function.  Now we have, for $m\triangleq 2(2+\alpha+2k)Nn^{2k+2}\log n$, $\pi(m)\leq Nn^{2k+2}+9n^{2k+2}=Nn^{2k+2}(1+o(1))$. Now, using $N\leq n^\alpha$, we have, $\log m\leq (2+\alpha+2k+o(1))\log n$, and therefore,
$$
\frac{m}{\log m} \geq \frac{2(2+\alpha+2k)Nn^{2k+2}\log n}{(2+\alpha+2k+o(1))\log n}=2(1-o(1))Nn^{2k+2},
$$
and since $\pi(m)\leq Nn^{2k+2}(1+o(1))$, we get a contradiction with PNT, for $n$ large enough.
\end{proof}
\subsection{Proof of Lemma \ref{lemma:Near-Uniformity}}\label{subsec:near-U}
\begin{proof}
We have for every $\ell\in [0,p_n-1]$
\begin{align*}
\pr(A=\ell \bmod(p_n) )=\sum_{m\in \Z}\int_{mp_n+\ell\over 2^N}^{mp_n+\ell+1\over 2^N}f_X\left( t\right)dt.
\end{align*}
We now let,
$$
M^*(n)=\frac{n^{5k+9/2}N2^N}{p_n}  \quad\text{and}\quad M_*(n)=\frac{2^N}{Nn^{5k/2+3}p_n}.
$$
Note the following bound on the size of $p_n=o(Nn^{3k+2})$, due to Lemma \ref{lemma:pnt-enough}.
We now consider separately the case $m\in [M_*(n),M^*(n)-1]$ and 
$m\notin [M_*(n),M^*(n)-1]$. For $m\in [M_*(n),M^*(n)-1]$ applying  Lemma~\ref{lemma:X-nearly-Lipschitz} with
\begin{align*}
\delta &={M_*(N)p_n\over 2^N}  \\
\Delta &={M^*(N)p_n\over 2^N}, 
\end{align*}
 we have for very $t$ and $\tilde t$ such that
\begin{align*}
&2^N\tilde t\in [mp_n+\ell,mp_n+\ell+1] \\
&2^Nt\in [mp_n,mp_n+1]
\end{align*}

\begin{align*}
{f_X \left(\tilde t\right)\over f_X \left( t\right)} 
\le \exp\left({2n 2^N \log\left({M^*(n)p_n\over  2^N}\right)    \over \beta^2 M_*(n)p_n} |\tilde t-t|\right).
\end{align*}
Since $|\tilde t-t|\le p_n/2^N$, we obtain
\begin{align*}
{f_X \left(\tilde t\right)\over f_X \left( t\right)} 
\le \exp\left({2n \log\left({M^*(n)p_n\over 2^N}\right)    \over \beta^2 M_*(n)} \right).
\end{align*}
Applying the value of  and $M^*(n)$  we have $\log\left({M^*(n)p_n\over 2^N}\right)=O(\log n)$.
Given an upper bound $p_n=O(Nn^{3k+2})$, we have that the exponent is 
\begin{align*}
O\left({n \log n \over M_*(n)}\right)&=O\left({n^{11k/2+6} N^2 \over 2^N}\right).
\end{align*}
It is easy to check that 
\begin{align*}
O\left({n^{11k/2+6} N^2 \over 2^N}\right)=O\left(N^{-1}n^{-5k-4}\right).
\end{align*}
Indeed, this holds, provided that we ensure:
$$
2^N >\mathcal{C}N^3 n^{21k/2+10} \iff N>3\log N + (21k/2+10)\log n+\log \mathcal{C},
$$
for some constant $\mathcal{C}$. Since $N\leq n^\alpha$, by assumption, it follows that, $3\log N \leq 3\alpha \log n$, hence, it boils down verifying,
$$
N>(3\alpha+21k/2+10)\log n+\log \mathcal{C},
$$
which is due to the hypothesis on $N$ stating $N\geq C(\alpha,k)\log n$ with $C(\alpha,k)=3\alpha+21k/2+10+\epsilon$, for some $\epsilon>0$. 
Thus  the term above is 
\begin{align*}
O\left(N^{-1}n^{-5k-4}\right).
\end{align*}
We obtain a bound
\begin{align*}
\exp\left(O\left(N^{-1}n^{-5k-4}\right)\right)=1+O\left(N^{-1}n^{-5k-4}\right).
\end{align*}
Similarly, we obtain for the same range of $t,\tilde t$
\begin{align*}
{f_X \left(\tilde t\right)\over f_X \left( t\right)}\ge 1-O\left(N^{-1}n^{-5k-4}\right).
\end{align*}
Thus
\begin{align*}
&|\sum_{M_*(n)\le m\le M^*(n)}
\left(\int_{mp_n+\ell\over 2^N}^{mp_n+\ell+1\over 2^N}f_X(t)dt
-\int_{mp_n\over 2^N}^{mp_n+1\over 2^N}f_X(t) dt\right)| \\
&|\sum_{M_*(n)\le m\le M^*(n)}
\int_{mp_n\over 2^N}^{mp_n+1\over 2^N}\left(f_X\left(t+{\ell\over 2^N}\right)-f_X(t)\right) dt| \\
&\le 
O\left(N^{-1}n^{-5k-4}\right)\sum_{M_*(n)\le m\le M^*(n)}\int_{mp_n\over 2^N}^{mp_n+1\over 2^N}f_X(t)dt \\
&=O\left(N^{-1}n^{-5k-4}\right),
\end{align*}
as the sum above is at most the integral of the density function, and thus at most $1$.

We now consider the case $m\le M_*(n)$. We have applying (\ref{eq:f-density1})
\begin{align*}
\int_0^{M_*(n)p_n\over 2^N}f_X(t)dt=O\left(\left({M_*(n)p_n\over 2^N}\right)^2\sqrt{n} \right)
\end{align*}
which applying the value of $M_*(n)$ is $O\left(N^{-2}n^{-5k-6+1/2}\right)=O\left(N^{-1}n^{-5k-4}\right)$.

Finally, suppose $m\ge M^*(n)$. Applying (\ref{eq:f-density2})
\begin{align*}
\int_{t\ge {M^*(n)p_n\over 2^N}}f_X(t)dt=O\left({\sqrt{n}\over {M^*(n)p_n\over 2^N}}\right)=O(N^{-1}n^{-5k-4}).
\end{align*}

We conclude that 
\begin{align*}
\max_{0\le \ell\le p_n-1}|\pr(A_{ij}=\ell \bmod(p_n) )-\pr(A_{ij}=0 \bmod(p_n) )|=O(N^{-1}n^{-5k-4}).
\end{align*}
Thus
\begin{align*}
&\pr(A_{ij}=\ell \bmod(p_n))-p_n^{-1} \\
&={p_n \pr(A_{ij}=\ell \bmod(p_n) )-\sum_{\ell} \pr(A_{ij}=\ell \bmod(p_n) ) \over p_n} \\
&\le {p_n \left(\pr(A_{ij}=0 \bmod(p_n) )+O(N^{-1}n^{-5k-4})\right)-p_n\left(\pr(A_{ij}=0 \bmod(p_n) )-O(N^{-1}n^{-5k-4})\right)\over p_n} \\
&=O(N^{-1}n^{-5k-4}),
\end{align*}
completing the proof of the lemma. A lower bound $O(N^{-1}n^{-5k-4})$ is shown similarly.\qedhere
\end{proof}

\subsection{Proof of Lemma \ref{lemma:tv-lognormal}}\label{subsec-tv-lognormal}
\begin{proof}
Let $X(\lambda) = (1-\lambda)e^J+\lambda a$, with $a>0$, and $J\distr \N(0,4)$. Note that, the density of $J$ is $f_J(t) = \frac{1}{\sqrt{8\pi}}\exp(-t^2/8)$, for every $t\in\R$. Fix a $\lambda_0\in(0,1)$. Note that since the total variation distance is upper bounded by one, it suffices to establish that there exists a constant $\mathcal{C}_{\lambda_0}$, such that
$$
d_{TV}(X(\lambda),X(0)) \leq \mathcal{C}_{\lambda_0} \lambda, \quad \quad \forall \lambda\in[0,\lambda_0].
$$
We begin with a calculation of the density of $X(\lambda)$. Note that, the density of $X(\lambda)$ is supported on $[\lambda a,\infty)$. Fix a $t\geq \lambda a$. Observe that, $\mathbb{P}\left(X(\lambda)\leq t\right) = \mathbb{P}\left(J\leq \log\left(\frac{t-\lambda a}{1-\lambda}\right)\right)$, and thus, differentiation with respect to $t$ yield the density of $X(\lambda)$ to be:
$$
 f_{X(\lambda)}(t) = \frac{1}{\sqrt{8\pi}(t-\lambda a)} \exp\left(-\frac18 \log\left(\frac{t-\lambda a}{1-\lambda}\right)^2\right),\quad\quad \forall t\geq \lambda a.
$$
Now let 
$$
f_X(t) = \frac{1}{\sqrt{8\pi}t}\exp\left(-\frac18\log(t)^2\right),
$$
be the density of the log-normal $e^{J}$, with $J\distr \N(0,4)$. Observe that, wherever it is defined,
$$
f_{X(\lambda)}(t) = \frac{1}{1-\lambda}f_X\left(\frac{t-\lambda a}{1-\lambda}\right).
$$
Recall next the definition of the TV distance, for two continuous random variables $Y,Z$ with densities $f_Y$ and $f_Z$, respectively: $d_{TV}(Y,Z) = \frac12\int |f_Y(t)-f_Z(t)|\; dt$. In particular, we need to control the following quantity:
\begin{align}
    d_{TV}(X(\lambda),X(0)) &=\frac12\int_{-\infty}^{\infty}|f_{X(\lambda)}(t) - f_X(t)|\; dt \\
    &= \frac12\int_0^{\lambda a}f_X(t)\; dt +\frac12 \int_{\lambda a }^{\infty}|f_{X(\lambda)}(t) - f_X(t)|\; dt \\
    &\leq\frac12\mathcal{M}_1 \lambda a+\frac12\int_{\lambda a}^\infty |f_{X(\lambda)}(t) - f_X(t)|\; dt \label{eq:temp}
\end{align}
where $\mathcal{M}_1 = \sup_{t\in\R}f_X(t)$, which is easily found to be finite. 
With this, we now focus on bounding the second term:
\begin{align*}
|f_{X(\lambda)}(t) - f_X(t)| &= \left|\frac{1}{1-\lambda}f_X\left(\frac{t-\lambda a}{1-\lambda}\right) - f_X(t) \right|\\
& \leq \left|\frac{1}{1-\lambda}f_X\left(\frac{t-\lambda a}{1-\lambda}\right) - \frac{1}{1-\lambda}f_X(t)\right| + \left|\frac{1}{1-\lambda} f_X(t)-f_X(t)\right| \\
&\leq \frac{1}{1-\lambda_0} \left|f_X\left(\frac{t-\lambda a}{1-\lambda}\right) - f_X(t)\right|+ \frac{\lambda}{1-\lambda_0}f_X(t),
\end{align*}
where the first inequality uses the triangle inequality, and the second inequality uses the fact that $\lambda\leq \lambda_0<1$. We then have:
\begin{align}
    \int_{\lambda a}^{\infty}|f_{X(\lambda)}(t) - f_X(t)| \; dt &\leq \frac{1}{1-\lambda_0}\int_{\lambda a}^{\infty}\left|f_X\left(\frac{t-\lambda a}{1-\lambda}\right) - f_X(t)\right|\;dt + \frac{\lambda}{1-\lambda_0}\int_{\lambda a}^\infty f_X(t)\; dt \\
    &\leq \frac{1}{1-\lambda_0}\int_{\lambda a}^{\infty}\left|f_X\left(\frac{t-\lambda a}{1-\lambda}\right) - f_X(t)\right|\; dt + \frac{\lambda}{1-\lambda_0}, \label{eq:temp2}
\end{align}
using the fact that $f_X(t)$ is a legitimate density, and thus, $f_X(t)\geq 0$ and $\int_0^\infty f_X(t)=1$. Combining everything we have thus far, in particular, Equations (\ref{eq:temp}) and (\ref{eq:temp2});  we arrive at: 
\begin{equation}\label{eq:tv-X-lambda-X-zero}
    d_{TV}(X(\lambda),X(0))\leq \lambda\left(\frac{1}{2(1-\lambda_0)}+\frac{a}{2}\mathcal{M}_1\right) + \frac{1}{2(1-\lambda_0)}\int_{\lambda a}^{\infty}\left|f_X\left(\frac{t-\lambda a}{1-\lambda}\right) - f_X(t)\right|\; dt,
\end{equation}
where $\mathcal{M}_1 = \sup_{t\in \R}f_X(t)$, is the maximum value of the log-normal density, which is a finite absolute constant. The remaining task is to bound the integral in Equation (\ref{eq:tv-X-lambda-X-zero}). Now, let
$$
I_{t,\lambda} = \left(\min\left(\frac{t-\lambda a}{1-\lambda},t\right),\max\left(\frac{t-\lambda a}{1-\lambda},t\right)\right).
$$
We now make the following observation: 
$$
\frac{t-\lambda a}{1-\lambda}\geq t \iff t-\lambda a\geq t-\lambda t\iff t\geq a.
$$
Namely, we have that for $t\geq a$:
\begin{equation}\label{eq:I-t-lambda}
I_{t,\lambda} = \left(t,\frac{t-\lambda a}{1-\lambda}\right).
\end{equation}
By the mean-value theorem, and the fact that $\lambda\leq \lambda_0<1$, we have:
\begin{align}
\left|f_X\left(\frac{t-\lambda a}{1-\lambda}\right) - f_X(t)\right| & = \left|\frac{t-\lambda a}{1-\lambda}-t \right|\cdot |f_X'(\xi)|, \quad \quad \exists \xi \in I_{t,\lambda} \\
&\leq \frac{\lambda}{1-\lambda_0}|t-a|\sup_{\xi \in I_{t,\lambda}} |f_X'(\xi)|\label{eq:temp3}.
\end{align}
Now, we study the derivative $f_X'(t)$ of the log-normal density, which computes easily as:
$$
f_X'(t) = -\frac{\exp(-\frac18\log(t)^2) (4+\log t)}{8\sqrt{2\pi}t^2}.
$$
Note that, as $t\to 0$, $-(4+\log t) = \log(1/t)(1+o(1))$, and thus, as $t\to 0$,
$$
f_X'(t) = \frac{1+o(1)}{8\sqrt{2\pi}}\exp\left(-\frac18\log(1/t)^2 +\log(\log(1/t)) +2\log(1/t)\right)=o(1).
$$
A similar conclusion holds also as $t\to\infty$. Inspecting the graph of this function, we encounter the following features: 
\begin{itemize}
    \item $f_X'(t)\geq 0$ on $[0,e^{-4}]$, and $f_X(t)<0$ on $(e^{-4},\infty)$.  
    \item There exists a $T_1\in(0,e^{-4})$ , such that $f_X'(t)$ is increasing on $(0,e^{-4})$, and decreasing on $(T_1,e^{-4})$. 
    \item There exists a $T_2\in(e^{-4},\infty)$ such that, $f_X'(t)$ is decreasing on $(e^{-4},T_2)$, and is increasing on $(T_2,\infty)$. 
\end{itemize}
In particular, $\sup_{t\in \R}|f_X'(t)|\leq \max\{f_X(T_1),-f_X(T_2)\}\triangleq \mathcal{M}_2$ (an absolute constant), recalling that $f_X(T_2)<0$. Now, as long as $t\geq \max(a,T_2)$, and recalling Equation (\ref{eq:I-t-lambda}), since $|f_X'(t)|$ is decreasing on $I_{t,\lambda} = (t,\frac{t-\lambda}{1-\lambda})$ (since $f_X'(t)$ is increasing, and negative on this interval, we have the aforestated condition for $|f_X'(t)|$), we have that $\sup_{\xi \in I_{t,\lambda}}|f_X'(\xi)| = |f_X'(t)|$. We now upper bound the integral:
$$
\int_{\lambda a}^\infty \left|f_X\left(\frac{t-\lambda a}{1-\lambda}\right)-f_X(t)\right|\;dt, 
$$
by splitting into two pieces: $t\in[\lambda a,\max(a,T_2)]$, and $t\in(\max(a,T_2),\infty)$. Recalling Equation (\ref{eq:temp3}), we have:
$$
\int_{\lambda a}^\infty \left|f_X\left(\frac{t-\lambda a}{1-\lambda}\right)-f_X(t)\right|\;dt\leq \frac{\lambda}{1-\lambda_0}\int_{\lambda a}^{\infty}|t-a|\sup_{\xi \in I_{t,\lambda}} |f_X'(\xi)|\; dt.
    $$
Now, investigating right-hand-side, we have:
\begin{align*} 
&\frac{\lambda}{1-\lambda_0}\left(\int_{\lambda a}^{\max(a,T_2)}|t-a|\sup_{\xi \in I_{t,\lambda}}|f_X'(\xi)|\;dt  + \int_{\max(a,T_2)}^\infty |t-a|\sup_{\xi \in I_{t,\lambda}}|f_X'(\xi)|\; dt\right) \\
&\leq \frac{\lambda}{1-\lambda_0}\left(\int_{\lambda a}^{\max(a,T_2)} |t-a|\mathcal{M}_2\; dt + \int_{\max(a,T_2)}^\infty |t-a|\cdot |f_X'(t)|\; dt\right) \\
&\leq \frac{\lambda}{1-\lambda_0}\left(\mathcal{C}_1(a)+ \int_{\max(a,T_2)}^\infty |t-a|\cdot |f_X'(t)|\; dt\right),
\end{align*}
using the fact that, $\int_{\lambda a}^{\max(a,T_2)}|t-a|\mathcal{M}_2\; dt$ is upper bounded by some absolute constant $\mathcal{C}_1(a)$, depending only on $a$ (by simply considering integral from $0$ to avoid $\lambda$ dependency, and the fact that $\mathcal{M}_2$ is finite). For the second integral, observe that:
\begin{align*}
\int_{\max(a,T_2)}^\infty |t-a|\cdot |f_X'(t)|;dt &= \int_{\max(a,T_2)}^\infty (t-a)\cdot \frac{\exp(-\frac18\log(t)^2)(4+\log(t))}{8\sqrt{2\pi}t^2}\;dt  \\
&\leq \frac{1}{8\sqrt{2\pi}}\int_{\max(a,T_2)}^\infty \frac{\exp(-\frac18\log(t)^2)(4+\log t)}{t}\; dt = \mathcal{C}_2(a)<\infty,
\end{align*}
using the fact that  the integrand is equal to,
$$
\exp\left(-\frac18\log(t)^2 + \log(4+\log(t))-\log(t)\right),
$$
which is 
$$
\exp\left(-\frac18 \log(t)^2 +O(\log t)\right),
$$ 
as $t\to\infty$. Combining these lines, we therefore have,
$$
\int_{\lambda a}^{\infty}\left|f_X\left(\frac{t-\lambda a}{1-\lambda}-f_X(t)\right|\right) \; dt \leq \frac{\lambda}{1-\lambda_0}(\mathcal{C}_1(a)+\mathcal{C}_2(a)),
$$
where $\mathcal{C}_1(a)$ and $\mathcal{C}_2(a)$ are two finite constants, depending only on $a$. Finally, recalling Equation (\ref{eq:tv-X-lambda-X-zero}), we then have:
\begin{align*}
d_{TV}(X(\lambda),X(0)) & \leq \lambda\left(\frac{1}{2(1-\lambda_0)}+\frac12\mathcal{M}_1\right) + \frac{\lambda}{2(1-\lambda_0)^2}(\mathcal{C}_1(a)+\mathcal{C}_2(a)) \\
& = \lambda \left(\frac{1}{2(1-\lambda_0)}+\frac12\mathcal{M}_1 + \frac{1}{2(1-\lambda_0)^2}(\mathcal{C}_1(a)+\mathcal{C}_2(a))\right)\\
&\triangleq \lambda\mathcal{C}_{\lambda_0}
\end{align*}
for every $\lambda\in[0,\lambda_0]$, as claimed earlier. Finally, taking $\mathcal{C}_{ij}=\max\{\mathcal{C}_{\lambda_0},1/\lambda_0\}$, we have $d_{TV}(X(\lambda),X(0))\leq \mathcal{C}_{ij}\lambda$ for every $\lambda\in[0,1]$. 
\qedhere
\end{proof}
\subsection{Proof of Lemma \ref{lemma:tv-tensorize}}\label{pf:lemma-tv-tensorize}
\begin{proof}
Recall the following coupling interpretation of total variation distance:
$$
d_{TV}(P,Q) = \inf\{\mathbb{P}(X\neq Y):(X,Y)\text{ is such that } X\distr P, Y\distr Q\}.
$$
Now, let $P_1,\dots,P_\ell$ and $Q_1,\dots,Q_\ell$ be measures defined on a sample space $\Omega$. Suppose $X_1,\dots,X_\ell$ are independent random variables with $X_i\distr P_i$ for $1\leqslant i\leqslant \ell$; and $Y_1,\dots,Y_\ell$ are independent random variables with $Y_i\distr Q_i$ for $1\leqslant i\leqslant \ell$. Consider the vectors, $\mathbf{X}=(X_1,\dots,X_\ell)$ and $\mathbf{Y}=(Y_1,\dots,Y_\ell)$. Observe that, $\mathbf{X}\distr \otimes_{k=1}^\ell P_k$ and $\mathbf{Y}\distr \otimes_{k=1}^\ell Q_k$. Note that, 
$$
\{\mathbf{X}\neq \mathbf{Y}\} \subseteq \bigcup_{k=1}^\ell \{X_k\neq Y_k\}.
$$
Now, using union bound, we have:
$$
d_{TV}\left(\otimes_{k=1}^\ell P_k , \otimes_{k=1}^\ell Q_k\right)\leqslant \mathbb{P}(\mathbf{X}\neq \mathbf{Y}) \leq \sum_{k=1}^\ell \mathbb{P}(X_k\neq Y_k).
$$
Now, recalling
$$
d_{TV}(P_k,Q_k) = \inf_{(X_k,Y_k):X_k\distr P_k,Y_k\distr Q_k}  \mathbb{P}(X_k\neq Y_k),
$$
and taking infimums on the right hand side, we immediately obtain:
$$
d_{TV}\left(\otimes_{k=1}^\ell P_k , \otimes_{k=1}^\ell Q_k\right)\leqslant \sum_{k=1}^\ell d_{TV}(P_k,Q_k),
$$
as claimed.
\end{proof}
\subsection{Proof of Lemma \ref{lemma:markov}}\label{subsec:lemma-markov}
\begin{proof}
Letting $Y_k=-X_k$ with $\mathbb{E}[Y_k]\leqslant-q$, we have:
\begin{align*}
    \mathbb{P}\left(\frac{1}{\ell}\sum_{k=1}^\ell X_k>\epsilon\right)  = 1-\mathbb{P}\left(\frac{1}{\ell}\sum_{k=1}^\ell Y_k\geqslant -\epsilon\right)
    =1-\mathbb{P}\left(\frac{1}{\ell}\sum_{k=1}^\ell (1+Y_k)\geqslant 1-\epsilon\right)\geqslant 1-\frac{1-q}{1-\epsilon},
\end{align*}
since for $Y=\frac{1}{\ell} \sum_{k=1}^\ell (1+Y_k)\geqslant 0$, it holds that $\mathbb{E}[Y]\leqslant 1-q$, and therefore by Markov inequality, we have $\mathbb{P}(Y\geqslant 1-\epsilon)\leqslant \frac{\mathbb{E}[Y]}{1-\epsilon}\leqslant \frac{1-q}{1-\epsilon}$.
\end{proof}

\bibliographystyle{amsalpha}


\newcommand{\etalchar}[1]{$^{#1}$}
\providecommand{\bysame}{\leavevmode\hbox to3em{\hrulefill}\thinspace}
\providecommand{\MR}{\relax\ifhmode\unskip\space\fi MR }
\providecommand{\MRhref}[2]{%
  \href{http://www.ams.org/mathscinet-getitem?mr=#1}{#2}
}
\providecommand{\href}[2]{#2}

\end{document}